\def\graybox(#1,#2){
\x=#1 \y=#2 
\z=\x \t=\y
\advance\z by 10 
\advance\t by 10 
\psframe[fillstyle=solid,fillcolor=lightgray,linewidth=0pt](\x,\y)(\z,\t) 
\psline[linewidth=.5pt](\x,\y)(\x,\t)(\z,\t)(\z,\y)(\x,\y)}
\newcommand{\define}{\textbf}
\newcommand{\excise}[1]{}
\newcommand{\isom}{\cong}
\renewcommand{\phi}{\varphi}
\renewcommand{\tilde}{\widetilde}
\renewcommand{\hat}{\widehat}
\renewcommand{\bar}{\overline}
\newcommand{\C}{\mathds{C}}
\newcommand{\Z}{\mathds{Z}}
\renewcommand{\P}{\mathds{P}}
\newcommand{\bV}{\mathds{V}}
\DeclareMathOperator{\rk}{rk}
\DeclareMathOperator{\sh}{sh}
\DeclareMathOperator{\Bij}{Bij}
\DeclareMathOperator{\Inj}{Inj}
\newcommand{\Sgp}{\mathcal{S}}        
\newcommand{\tto}{\twoheadrightarrow}
\newcommand{\bull}{ {\scriptscriptstyle{\bullet}}  }
\newcommand{\bbe}{\mathbb{e}}     
\newcommand{\bbc}{\mathbb{c}}
\newcommand{\Gr}{ {Gr} }
\newcommand{\Fl}{ {Fl} }
\newcommand{\sGr}{\mathrm{Gr}}    
\newcommand{\sFl}{\mathrm{Fl}}    
\newcommand{\LG}{LG}
\newcommand{\sLG}{\mathrm{LG}}    
\newcommand{\IG}{IG}
\newcommand{\sIG}{\mathrm{IG}}    
\newcommand{\sIFl}{\mathrm{Fl}^C}
\newcommand{\Tz}{\mathbf{T}}       
\newcommand{\bp}{\bm{p}}
\newcommand{\triple}{{\bm\tau}}
\newcommand{\bGamma}{\mathbf{\Gamma}}
\newcommand{\sS}{\mathbf{S}}              
\newcommand{\enS}{\mathbf{S}}              
\newcommand{\tenS}{\bm{S}}              
\newcommand{\SSS}{\mathfrak{S}}        
\newcommand{\bsS}{\overleftarrow{\mathfrak{S}}}  
\newcommand{\bbOmega}{\mathbb{\Omega}}
\newcommand{\tS}{S}             
\newcommand{\tbS}{\mathds{S}}   
\newtheorem{theorem}{Theorem}[section]
\newtheorem{proposition}[theorem]{Proposition}
\newtheorem{corollary}[theorem]{Corollary}
\theoremstyle{definition}
\newtheorem{definition}[theorem]{Definition}
\newtheorem{remark}[theorem]{Remark}
\newtheorem{example}[theorem]{Example}
\begin{document}

\title{Infinite flags and Schubert polynomials}
\author{David Anderson\thanks{Partially supported by NSF CAREER DMS-1945212.}}
\date{May 31, 2023}

\maketitle

\renewcommand{\bfseries}{\upshape}

\begin{abstract}
We study Schubert polynomials using geometry of infinite-dimensional flag varieties and degeneracy loci.  Applications include Graham-positivity of coefficients appearing in equivariant coproduct formulas and expansions of back-stable and enriched Schubert polynomials.  We also construct an embedding of the type C flag variety, and study the corresponding pullback map on (equivariant) cohomology rings.
\end{abstract}

\setcounter{tocdepth}{1}

\tableofcontents

\renewcommand{\bfseries}{\itshape}

\section{Introduction}\label{s.intro}

Schubert polynomials represent the classes of Schubert varieties in the cohomology ring of a flag variety.  For $\Fl(\C^n)$, Schubert varieties $\Omega_w$ are indexed by permutations $w\in \Sgp_n$, and their classes form an additive basis of the cohomology ring.  The ring $H_T^*\Fl(\C^n)$ has a Borel presentation as $\Z[x_1,\ldots,x_n,y_1,\ldots,y_n]/I$, so some choices are involved in lifting a class to a polynomial.  Among these choices, the polynomials $\SSS_w(x;-y)$, introduced by Lascoux and Sch\"utzenberger in 1982, are widely accepted as the nicest representatives for $[\Omega_w]$, because of their many wonderful combinatorial, algebraic, and geometric properties \cite{LS}.

One of these properties is {\em stability} with respect to embeddings of flag varieties: the same polynomial represents $\Omega_w$, whether one considers the permutation $w$ in $\Sgp_n$, or in $\Sgp_{n+1}$, or in any $\Sgp_m$ for $m\geq n$.  As part of a search for analogous Schubert polynomials for flag varieties of other types, Fomin and Kirillov enumerated a list of desirable properties possessed by $\SSS_w$, including a version of stability among them \cite{FK}.  Around the same time, Billey and Haiman used stability (of a subtly different sense from that of \cite{FK}) as a defining property for Schubert polynomials in classical types \cite{BH}.

The operative fact used by Billey and Haiman is this: in the limit, the relations defining cohomology rings disappear, and one obtains canonical polynomials representing Schubert classes.  In type C, one builds an infinite isotropic flag variety starting with a union of Lagrangian Grassmannians.  The Billey-Haiman polynomials are, by definition, stable Schubert classes in the limiting cohomology ring, which is a polynomial ring over a nontrivial base ring $\Gamma$.  
The analogous construction in type A leads not to the Lascoux-Sch\"utzenberger polynomials, but rather to the {\em enriched Schubert polynomials} to be studied here.  (A more precise description of the analogy is at the end of this introduction.)  These polynomials, denoted $\enS_w(c;x;y)$, have coefficients in a nontrivial base ring $\Lambda$, and they specialize to $\SSS_w(x;{-y})$ under a canonical quotient $\Lambda \to \Z$.  The same holds also for the (essentially equivalent) {\em back-stable Schubert polynomials} recently studied by Lam, Lee, and Shimozono, building on ideas of Buch and Knutson, although there the perspective is reversed, the correspondence with Schubert classes being a theorem rather than a definition \cite[\S6]{LLS}.

The subject of this article is a variation on \cite{LLS} and \cite{part1}.  Using the geometry of certain infinite-dimensional flag varieties, we provide an alternative construction of the back-stable Schubert polynomials---in the guise of enriched Schubert polynomials \cite{part1}.  These constructions lead naturally to alternative proofs of basic properties of these polynomials, and we include some of these arguments.

When discussing infinite-dimensional flag varieties, some care must be taken to distinguish among several constructions.  The main players in our story will be the {\em Sato flag variety} and {\em Sato Grassmannian}.  All the other flag varieties embed in these, including varieties parametrizing finite-dimensional (or finite-codimensional) subspaces, and infinite isotropic (type C) flag varieties.  The affine flag varieties and Grassmannians also embed, and these embeddings are used in \cite{eq-aff}.

All our infinite-dimensional flag varieties are limits of finite-dimensional ones, so they may be regarded as devices for keeping track of stability: one can always translate statements about infinite-dimensional varieties into statements about compatible sequences of finite-dim{\-}ensional varieties.  This is sometimes worked out explicitly, and sometimes left implicit; given the statements, there is generally little trouble in supplying proofs.

Some new features are more salient in the infinite setting, though.  Here we focus on morphisms among various Grassmannians and flag varieties, and their effect on Schubert polynomials.  The {\em direct sum} morphisms are particularly interesting: we use them to study a coproduct on equivariant cohomology (\S\ref{s.dirsum}).  For instance, the coproduct of a Schubert class $[\Omega_\lambda]$ in the Sato Grassmannian is
\[
  [\Omega_\lambda] \mapsto \sum_{\mu,\nu} \hat{c}_{\mu,\nu}^\lambda(y) [\Omega_\mu] \otimes [\Omega_\nu], 
\]
for some polynomials $\hat{c}_{\mu,\nu}^\lambda(y)$, called {\em dual Littlewood-Richardson polynomials}  \cite{molev}.  Computing the coproduct via the direct sum morphism, we give a direct proof that these polynomials (and variations of them) satisfy Graham-positivity (Theorems~\ref{t.positive}, \ref{t.positive2}, and \ref{t.positiveC}).  The first of these positivity results was proved in \cite{LLS} by passing through the quantum-affine correspondence.  The second involves two sets of equivariant parameters $y$ and $y'$, and was suggested in \cite{LLS}, but not proved.  The third is an analogue in type C, and appears to be new.

The direct sum morphism also leads to a way of computing the equivariant coproduct coefficients $\hat{c}_{\mu,\nu}^\lambda(y)$, by expanding a product of one double Schur polynomial by a double Schur with permuted $y$-variables; a similar method computes the flag variety variants (Proposition~\ref{c.coprod-schur-schub}). 
While the idea of using direct sum in relation to coproduct has many antecedents (e.g., \cite{bergeron-sottile,buch,thomas-yong,knutson-lederer,LLS}), I do not know of instances where it has been used in the equivariant setting.

Much of this article has close parallels in \cite{LLS}.  Two technical points of contrast are worth highlighting.  First, as will be made clear in the constructions of \S\ref{s.sato}, the Sato flag variety $\sFl$ considered here is larger than that of \cite{LLS}; this has the effect of making the equality $H_T^*\sFl = \Lambda[x;y]$ a calculation rather than a convention, and it also allows the affine flag variety to embed in $\sFl$.  Second, and perhaps more substantially, we do not insist on a ``GKM''-type description of equivariant cohomology, although we do include a discussion of fixed points.  Instead, cohomology rings are presented in terms of Chern class generators.  This allows us to use smaller torus actions, with larger fixed loci, which are needed in the construction of the direct sum morphisms.

The re-interpretation of back-stable Schubert polynomials was not the original motivation for this work; the connection became apparent (to me) only after the fact.  The constructions were forced by requiring that the stability one sees in the type C polynomials of Billey-Haiman should be compatible with natural embeddings of the symplectic Grassmannians and flag varieties inside the usual (type A) ones.  This basic notion guides much of what we do here.  As a preview, let us index a basis for $\C^{2n}$ as $e_{-n+1},\ldots,e_{0},e_1,\ldots,e_n$,  
and define a symplectic form so that
\[
  \langle e_{1-i}, e_i \rangle = -\langle e_i, e_{1-i} \rangle = 1
\]
for $i>0$, and all other pairings are $0$.  The inclusions
\[
 \C^{2n} \hookrightarrow \C^{2n+2} = \C\cdot e_{-n} \oplus \C^{2n} \oplus \C\cdot e_{n+1}
\]
lead to embeddings of Lagrangian Grassmannians $\LG(n,\C^{2n}) \hookrightarrow \LG(n+1,\C^{2n+2})$, defined by $E \mapsto \C\cdot e_{-n} \oplus E$.  
The same maps define embeddings of ordinary Grassmannians, 
so that the diagram
\[
\begin{tikzcd}
 \LG(n,\C^{2n}) \ar[r,hook] \ar[d,hook] & \LG(n+1,\C^{2n+2}) \ar[d,hook] \\
 \Gr(n,\C^{2n}) \ar[r,hook] & \Gr(n+1,\C^{2n+2})
\end{tikzcd}
\]
commutes.  Taking appropriate limits of cohomology rings, for the type A Grassmannian one sees the ring of symmetric functions $\Lambda$, and for the Lagrangian Grassmannian, the ring $\Gamma$ of $Q$-functions.  In the limit, pullback by the embedding $\LG(\C^{2n})\subset \Gr(n,\C^{2n})$ corresponds to a canonical surjection $\Lambda \tto \Gamma$.  (In symmetric function theory, one often sees an inclusion $\Gamma \hookrightarrow \Lambda$; this also arises from a morphism between infinite Grassmannians, but a less natural one from our perspective.  See Remark~\ref{r.gamma-lambda}.)

Similar maps define embeddings of flag varieties.  The system of embeddings for symplectic (type C) varieties is what Billey and Haiman use to define type C Schubert polynomials.  The limit of the compatible embeddings in type A leads directly to the Sato flag variety, and to enriched Schubert polynomials $\sS_w(c;x;y)$ corresponding to Schubert classes.  When one evaluates the $c$ variables as certain symmetric functions (in an infinite variable set), these polynomials become the back-stable Schubert polynomials of \cite{LLS}.

Many basic properties of these polynomials were enumerated in \cite{part1}, inspired by similar properties of the back-stable polynomials \cite{LLS}.  In summary, the overall aim of this article is to examine those aspects of Schubert polynomials for which the geometry of infinite flag varieties provides a new or useful perspective---particularly, what happens to Schubert classes under various morphisms of flag varieties.

\bigskip
\noindent
{\it Acknowledgements.}  This work grew out of a joint project with William Fulton, and I thank him for our long-running collaboration, for encouraging me to pursue this extension, and for vital feedback along the way.  
My great debt to the authors of \cite{LLS} should be evident.  Much of what I have learned about infinite-dimensional flag varieties began with lectures and papers by Mark Shimozono, and I would like to thank him in particular for his lucid and down-to-earth exposition.  I am grateful to Allen Knutson for clarifying conversations about \cite{knutson-lederer} and about fixed points.

\section{Preliminaries}

%
\subsection{Permutations}

With some modifications, we follow \cite{LLS} for permutations.

We write $\Bij( X )$ for the group of all bijections of a set $X$ to itself.  We will only consider subsets $X\subseteq \Z$, and we focus on the subgroup $\Sgp_\Z \subseteq \Bij(\Z)$ consisting of all $w$ such that $\{ i\in \Z\,|\, w(i)\neq i \}$ is finite---that is, $w$ fixes all but finitely many integers.  Some variations will be discussed in \S\ref{s.fp}.

The subgroup $\Sgp_{\neq0}$ is $\Sgp_+\times \Sgp_-$, where $\Sgp_+ = \Sgp_\Z \cap \Bij(\Z_{>0})$ and $\Sgp_- = \Sgp_\Z \cap \Bij(\Z_{\leq0})$.  That is, $\Sgp_{\neq 0}$ is the subgroup of $\Sgp_\Z$ preserving the subsets of positive and non-positive integers.

For finite intervals $[m,n]$, we usually write $\Sgp_{[m,n]}  = \Bij([m,n])$, and $\Sgp_n = \Sgp_{[1,n]}$ for $n>0$.  We have
\[
  \Sgp_+ = \bigcup_{n>0} \Sgp_{[1,n]}, \quad \Sgp_- = \bigcup_{n>0} \Sgp_{[-n,0]}, \quad \text{and} \quad \Sgp_\Z = \bigcup_{n>0} \Sgp_{[-n,n]}.
\]

Elements $w\in\Sgp_\Z$ are written in one-line notation: choose an interval $[m,n]$ so that $w(i)=i$ for all $i$ outside $[m,n]$, and write $w=[w(m),\ldots,w(n)]$.

\define{Bruhat order} on $\Sgp_\Z$ is defined as follows.  For each $p,q\in \Z$ and $w\in \Sgp_\Z$, we set
\[
  k_w(p,q) = \# \{ a\leq p \,|\, w(a) > q \}.
\]
Then $v\leq w$ in $\Sgp_\Z$ if $k_v(p,q)\leq k_w(p,q)$ for all $p,q\in \Z$.

An element $w\in \Sgp_\Z$ is \define{Grassmannian} if it has no descents except possibly at $0$, so $w(i)<w(i+1)$ for all $i\neq 0$.  Grassmannian elements are in correspondence with partitions $\lambda$: given a Grassmannian permutation $w$, the partition $\lambda = (\lambda_1\geq \lambda_2 \geq \cdots \geq 0)$ is defined by $\lambda_k = w(1-k) -1+k$, for $k>0$.  Conversely, given $\lambda$, one defines $w=w_\lambda$ by setting $w_\lambda(k)=\lambda_{1-k}+k$ for $k\leq 0$, and then filling in the positive values with the unused integers in increasing order.

The \define{length} $\ell(w)$ of $w\in \Sgp_\Z$ is the cardinality of the (finite) set $\{i<j\,|\, w(i)>w(j)\}$.  

The element $w_\circ^\infty \in \Bij(\Z)$ defined by $w_\circ^\infty(i) = 1-i$ does not lie in $\Sgp_\Z$, but conjugation by $w_\circ^\infty$ defines a length-preserving outer automorphism $\omega$ of $\Sgp_\Z$:
\[
 \omega(w)(i) = (w_\circ^\infty w w_\circ^\infty)(i) = 1-w(1-i).
\]

\subsection{Vector spaces}

Let $V$ be a countable-dimensional vector space with basis $e_i$ for $i\in \Z$.  For any interval $[m,n]$, there is a subspace $V_{[m,n]}$ with basis $e_i$ for $i\in [m,n]$.  
For semi-infinite intervals we usually write $V_{\leq n}$, or $V_{>m}$.  
The \emph{standard flag} $V_{\leq\bullet}$ in $V$ has components $V_{\leq k}$ with basis $e_i$ for $i\leq k$, for each $k\in \Z$.  The \emph{opposite flag} $V_{>\bullet}$ is comprised of spaces ${V}_{>k}$ spanned by $e_i$ for $i>k$.  Clearly $V=V_{\leq0} \oplus V_{>0}$ (and $V=V_{\leq k} \oplus V_{>k}$ for any $k$).

When the context is clear, we use the same notation for standard and opposite flags in $V_{(m,m]}$, for instance writing $V_{\leq k} \subseteq V_{(m,n]}$ instead of $V_{(m,k]} \subseteq V_{(m,n]}$.

A torus $T$ acts on $V$, so that $e_i$ is scaled by the character $y_i$, for $i\in \Z$.  So $T$ also acts on each subspace $V_{[m,n]}$.  We generally take $T$ to be the countable product $T=\prod_{i\in\Z} \C^*$, so that its classifying space is $\prod_{i\in\Z} \P^\infty$.  This is an inverse limit of finite products of $\P^\infty$, so the $T$-equivariant cohomology of a point is a polynomial ring in the $y$ variables:
\[
  H_T^*(\mathrm{pt}) = \Z[y] = \Z[\ldots,y_{-1},y_0,y_1,\ldots].
\]
(For those who prefer finite dimensional groups, one may also take $T$ to be any torus, with weights $y_i$, for $i\in\Z$.  By taking $T$ sufficiently large, any given finite set of $y$'s can be made algebraically independent.)

\subsection{Flag varieties}

For any vector space $W$, the flag variety $\Fl_+(W)$ is the space of all complete flags of finite-dimensional subspaces of $W$.  That is, a point of $\Fl_+(W)$ is $E_\bullet = (0 \subset E_1 \subset E_2 \subset \cdots \subset W)$, where $\dim E_i = i$. 
When $W$ is finite-dimensional, this is the usual complete flag variety.  In general, it is a limit of finite-dimensional flag varieties: to construct $\Fl_+(W)$, for each $d>0$, one forms $\Gr(d,W)$ as the union of $\Gr(d,U)$ over finite-dimensional subspaces $U\subset W$; then $\Fl_+(W)$ embeds naturally in the product $\prod_{d>0} \Gr(d,W)$.  So $\Fl_+(W)$ inherits its topology from the product topology on the Grassmannians.  This is the same as the inverse limit topology with respect to projections onto partial flag varieties.

There is also a variety $\Fl_-(W)$ parametrizing flags of finite-codimensional subspaces of $W$, but here an extra requirement is imposed: one fixes a flag $W^\bullet$ of finite-codimen{\-}sional subspaces of $W$.  Then a point of $\Fl_-(W)$ is $E^\bullet = ( \cdots \subset E^2 \subset E^1 \subset W)$, where $E^i$ has codimension $i$ in $W$, and each $E^i$ contains some $W^j$.  (Often we negate indices and write $E_{-i}=E^i$ for such flags.)  Equivalently, let $K_i = W/W^i$, and consider the {\it restricted dual space} $W^{*'} = \bigcup_i K_i^*$.  (This is finite-dimensional when $W$ is, and countable-dimensional if $\dim W$ is infinite.)  Then $\Fl_-(W) = \Fl_+(W^{*'})$.

In our setting, an equivalent construction of these varieties is as follows.  The  flag variety $\Fl(1,\ldots,n;V_{>0})$ is a union of finite-dimensional partial flag varieties $\Fl(1,\ldots,n;V_{[1,m]})$ over $m\geq n$, with respect to standard embeddings coming from $V_{[1,m]} \subset V_{[1,m+1]}$.

The finite-dimensional flag varieties have tautological bundles $\tS_i$, and $T$ acts, restricting its action on $V$.  Taking the graded inverse limit of cohomology rings, one has
\[
  H_T^*\Fl(1,\ldots,n;V_{>0}) = \Z[y][x_1,\ldots,x_n],
\]
where $x_i$ restricts to $-c_1^T(\tS_i/\tS_{i-1})$ on each finite-dimensional variety.

Next we take the inverse limit of $\Fl(1,\ldots,n;V_{>0})$ over $n$, using natural projections.  (So it is a ``pro-ind-variety'': the inverse limit of a direct limit of algebraic varieties.)  Its equivariant cohomology is the direct limit of rings $\Z[y][x_1,\ldots,x_n]$ as $n\to\infty$, so
\[
  H_T^*\Fl_+(V_{>0}) = \Z[y][x_1,x_2,\ldots].
\]

Similarly, the construction of $\Fl_-(V_{\leq0})$ (with respect to the standard flag $V_{\leq\bullet}$) realizes it as a limit of the flag varieties $\Fl(m-n,\ldots,m;V_{(-m,0]})$, which have tautological bundles $\tS_i$ of codimension $-i$, for $i\leq 0$.  Its equivariant cohomology is
\[
   H_T^*\Fl_-(V_{\leq0}) = \Z[y][x_0,x_{-1},\ldots],
\]
where again $x_i$ restricts to $-c_1^T(\tS_{i}/\tS_{i+1})$ on each finite-dimensional variety, for $i\leq 0$.

\begin{remark}
One sometimes sees yet another limit, taking a union $\bigcup_{n>0}\Fl(V_{[1,n]})$ over the standard embeddings $V_{[1,n]} \subset V_{[1,n+1]}$.  This leads to what might be called a {\it restricted flag variety} $\Fl'_+(V_{>0})$, parametrizing flags $E_\bullet$ of finite-dimensional subspaces which are eventually standard: $E_k = V_{\leq k}$ for all $k \gg0$.  As a direct limit, its cohomology is
\[
 H_T^*\Fl'_+(V_{>0}) = \Z[y][\![x]\!]_{\mathrm{gr}},
\]
the ring of graded power series in $x$ with coefficients in $y$.  (For example, the infinite sum $\sum_{i>0} x_i$ is an element of this ring.)  The embedding $\Fl'_+(V_{>0}) \hookrightarrow \Fl_+(V_{>0})$ corresponds to the inclusion of the polynomial ring $\Z[y][x] \hookrightarrow \Z[y][\![x]\!]_{\mathrm{gr}}$.

We will not make use of these restricted varieties, except to mention their appearance in the literature.  One of several advantages of working with $\Fl_+(V_{>0})$ rather than $\Fl'_+(V_{>0})$ is that elements of its cohomology are automatically polynomials.
\end{remark}

\subsection{A technical note on limits}

For a rising union of spaces $X= \bigcup X_n$, the direct limit topology is defined so that a subset $U\subset X$ is open exactly when each intersection $U\cap X_n$ is open.  For an inverse system of spaces $\cdots \to X_n \to X_{n-1} \to \cdots$, the inverse limit topology on $X= \varprojlim X_n$ is the coarsest topology so that the projections $X \to X_n$ are continuous; in our context this is a subspace of the product topology on $\prod X_n$.

From the contravariance of cohomology, one may naively expect that
\[
  H^*\left( \bigcup X_n \right) = \varprojlim H^*(X_n) \quad \text{ and } \quad H^*\left( \varprojlim X_n \right) = \varinjlim H^*(X_n).
\]
Using \v{C}ech-Alexander-Spanier cohomology, and for the relatively nice topological spaces we encounter, these naive expectations hold.  For finite-dimensional algebraic varieties, this cohomology theory agrees with the more familiar singular cohomology.  These facts may be gleaned from standard algebraic topology texts; see also \cite[Appendix~A]{ecag}.

\section{Sato Grassmannians and flag varieties}\label{s.sato}

The primary focus of this article is on a different type of infinite-dimensional flag variety.  The {\it Sato Grassmannian} parametrizes subspaces of $V$ which are infinite in both dimension and codimension (but satisfy some other requirements).  It can also be described as a certain union of finite-dimensional Grassmannians.  The {\it Sato flag variety} similarly parametrizes flags of spaces belonging to Sato Grassmannians.  The constructions presented in this section are variations on ones found in \cite{LLS}, which in turn are based on Kashiwara's construction of thick flag manifolds \cite{kashiwara}, as well as certain Hilbert manifolds used as models for loop groups \cite{pressley-segal}.

Fixing our base flag $V_{\leq\bullet}$ as before, and an integer $k$, the \define{Sato Grassmannian} $\sGr^k$ is the set of all subspaces $E \subseteq V$ such that
\begin{enumerate}[(1)]
\item $V_{\leq -m} \subseteq E \subseteq V_{\leq m}$ for some $m>0$ (and hence all $m\gg0$), and \label{sG.cond1}

\item $\dim E/(E\cap V_{\leq 0}) - \dim V_{\leq 0}/(E\cap V_{\leq 0}) = k$. \label{sG.cond2}
\end{enumerate}
The first condition implies that both $E/(E\cap V_{\leq 0})$ and $V_{\leq 0}/(E\cap V_{\leq 0})$ are finite-dimensional, so the second condition makes sense.

This space depends on the base flag, and occasionally it is useful to indicate this dependence in the notation, writing $\sGr^k(V;V_{\leq\bullet})$.  On the other hand, we use the case $k=0$ frequently, so we sometimes drop the superscript and write $\sGr = \sGr^0$.

Condition \ref{sG.cond1} means  that $E\subset V$ comes from a point in $\Gr(m+k,V_{(-m,m]})$ for some $m$ and $k$, by mapping $E_{m+k} \subseteq V_{(-m,m]}$ to $V_{\leq -m} \oplus E_{m+k} \subseteq V_{\leq -m}\oplus V_{(-m,m]} = V_{\leq m}$.  Condition \ref{sG.cond2} specifies $k$.

Using this observation, for $k=0$ one constructs (and topologizes) the Sato Grassmannian $\sGr=\sGr^0$ as the union
\[
\sGr = \bigcup_{m\geq 0} \Gr(m,V_{(-m,m]})
\]
of finite-dimensional Grassmannians, using the embeddings $\Gr(m,V_{(-m,m]}) \hookrightarrow \Gr(m+1,V_{(-m-1,m+1]})$ which map an $m$-dimensional subspace $E_m$ of $V_{(-m,m]}$ to the $(m+1)$-dimensional subspace $\C\cdot e_{-m} \oplus E$ of $V_{(-m-1,m+1]}$.

Similarly, for any $k\in \Z$ one has
\[
\sGr^k = \bigcup_{m\geq |k|} \Gr(m+k,V_{(-m,m]}).
\]
(Without changing the result, these limits could be refined to run over $\Gr(m+k;V_{(-m,m']})$, for $m,m'\geq 0$, since these are co-final with $\Gr( m+k, V_{(-m,m]} )$.)

These unions are compatible with actions of $T$, so $T$ acts on $\sGr$.  Since $\sGr$ is a direct limit of finite-dimensional Grassmannians, the cohomology ring $H_{T}^*\sGr$ is the (graded) inverse limit:
\[
  H_{T}^*\sGr = \varprojlim_{m} H_{T}^*\Gr(m,V_{(-m,m]}) = \Z[y][c_1,c_2,\ldots] = \Lambda[y].
\]
Here $\Lambda = \Z[c_1,c_2,\ldots]$ is a polynomial ring;
the variable $c_i$ restricts to $c^{T}_i(V_{\leq0} - \tS_0)$ on each $\Gr(m,V_{(-m,m]})$, where $\tS_0 \subseteq V_{(-m,m]}$ is the tautological bundle of rank $m$.  From now on, we simply write $c_i = c^T(V_{\leq0} - S_0)$, with the notation $S_0$ standing for a tautological bundle on some large enough Grassmannian.

A similar calculation produces the same result for $H_{T}^*\sGr^k$, with variables $c_i^{(k)} = c_i^{T}(V_{\leq k} - \tS_k)$, so on each $\Gr(m+k,V_{(-m,m]})$, $\tS_k \subseteq V_{(-m,m]}$ is the tautological bundle of rank $m+k$.

The \define{Sato flag variety} is
\[
 \sFl  = \left\{ E_\bullet =( \cdots \subset E_{-1} \subset E_0 \subset E_1 \subset \cdots ) \,|\, E_k \in \sGr^k \right\},
\]
so it is a subvariety of $\prod_{k\in\Z} \sGr^k$.  Using the natural projections to $\prod_{|k|\leq n} \sGr^k$, it can be written as an inverse limit of a union of finite-dimensional partial flag varieties:
\[
 \sFl = \varprojlim_{n} \bigcup_{m} \Fl( m-n,\ldots,m,\ldots,m+n; V_{(-m,m]} ).
\]
Each such partial flag variety has a tautological flag of subbundles,
\[
 \tS_{-n} \subset \cdots \subset \tS_0 \subset \cdots \subset \tS_n \subseteq V_{(-m,m]},
\]
with $\tS_i$ of rank $m+i$.  (As with the Grassmannians, the limit can be taken over partial flag varieties $\Fl(m-n,\ldots,m'+n';V_{(-m,m']})$.)

The cohomology ring of the limit is computed as
\begin{align*}
 H_{T}^*\sFl &= \varinjlim_n \varprojlim_{m} H_{T}^* \Fl( m-n,\ldots,m,\ldots,m+n; V_{(-m,m]} ) \\
 & = \Lambda[y][\ldots,x_{-1},x_0,x_1,\ldots] = \Lambda[x;y],
\end{align*}
where $x_i = -c_1^{T}(\tS_i/\tS_{i-1})$ and $c_i = c_i^{T}(V_{\leq 0} - \tS_0)$.

Like the Sato Grassmannian, the Sato flag variety depends on the choice of base flag $V_{\leq\bullet}$, and we sometimes write $\sFl(V;V_{\leq\bullet})$ for $\sFl$.  The precise dependence is this: given two $\Z$-indexed flags $E_\bullet$ and $E'_\bullet$ of subspaces of $V$, one has $\sFl(V;E_\bullet) = \sFl(V;E'_\bullet)$ if and only if $E_\bullet \in \sFl(V;E_\bullet')$ and $E'_\bullet \in \sFl(V;E_\bullet)$.  (This is just the condition that $E_\bullet$ and $E'_\bullet$ are cofinal in both their ascending and descending sequences.)  The same condition describes when $\sGr^k(V;E_\bullet)=\sGr^k(V;E'_\bullet)$.

A bit more generally, for any increasing sequence of integers $\bp$, indexed so that $p_i \leq 0$ for $i\leq 0$ and $p_i>0$ if $i>0$, there is a {\em partial Sato flag variety}
\[
 \sFl(\bp) = \left\{ E_\bullet =( \cdots \subset E_{p_{-1}} \subset E_{p_0} \subset E_{p_1} \subset \cdots ) \,|\, E_{p_k} \in \sGr^{p_k} \right\},
\]
a subspace of $\prod_k \sGr^{p_k}$.  Its cohomology ring is naturally identified with a subring of $H_T^*\sFl = \Lambda[x;y]$, by taking polynomials that are symmetric in groups of $x$-variables $\{x_{p_k+1},\ldots,x_{p_{k+1}}\}$.  (The elementary symmetric polynomials in these variables correspond to Chern classes of $(\tS_{p_{k+1}}/\tS_{p_k})^*$.)

\begin{remark}
Our definition of $\sGr$ is the same as that of \cite[\S6]{LLS}, but our $\sFl$ is larger than theirs, which may be considered a restricted Sato flag variety, $\sFl' \subset \sFl$.  This $\sFl'$ is a union of finite-dimensional flag varieties, so its cohomology ring is an inverse limit: it is $H_T^*\sFl' = \Lambda[y][\![x]\!]_{\mathrm{gr}}$, the ring of formal series in $x$, of bounded degree, with coefficients in $\Lambda[y]$.  Pullback by the embedding $\sFl' \hookrightarrow \sFl$ corresponds to the inclusion $\Lambda[x;y] \hookrightarrow \Lambda[y][\![x]\!]_{\mathrm{gr}}$.  We prefer to work with polynomials, and hence with $\sFl$.
\end{remark}

\section{Schubert varieties and Schubert polynomials}\label{s.schub}

Schubert varieties in $\sFl$ are defined with respect to the opposite flag $V_{>\bullet}$.  For each $w\in \Sgp_\Z$, and $p,q\in\Z$, recall that
\[
  k_w(p,q) = \#\{ a\leq p \,|\, w(a)>q \}.
\]
An example is shown in Figure~\ref{f.sz}.  The \define{Schubert variety} is
\[
  \Omega_w = \{ E_\bullet\,|\, \dim( E_p \cap V_{>q}) \geq k_w(p,q) \text{ for all } p,q \}.
\]
The conventions are set up so that $\Omega_w$ is a compatible limit of similarly defined loci in the finite-dimensional varieties $\Fl(m-n,\ldots,m+n;V_{(-m,m]})$. 

The \define{Rothe diagram} and \define{essential set} of a permutation $w\in \Sgp_\Z$ are determined just as in \cite{flags}: the diagram is what remains when one strikes out boxes below and right of each dot, and the essential set is the set of $(k,p,q)$ where $(p,q)$ is a southeast corner of the diagram and $k=k_w(p,q)$.  An example is shown in Figure~\ref{f.sz}.  The conditions $\dim(E_p\cap V_{>q})\geq k$, for $(k,p,q)$ in the essential set of $w$, suffice to define $\Omega_w$; this follows from the analogous statement for finite-dimensional Schubert varieties.

\begin{figure}
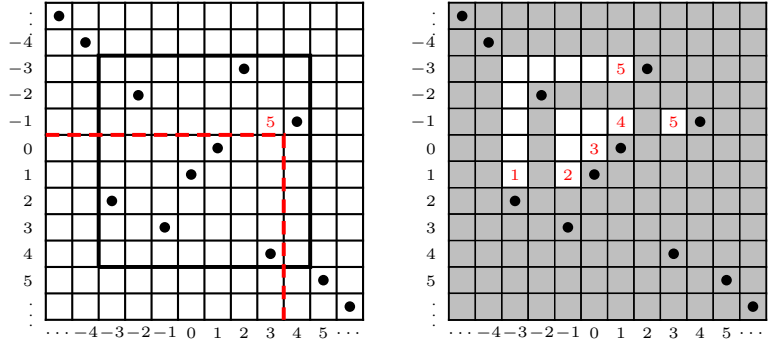

\begin{center}
\pspicture(-70,-70)(70,70)

\psgrid[unit=10pt,subgriddiv=0,gridlabels=0](-6,-6)(6,6)

{\tiny
\rput[r](-65,-55){$\vdots$}
\rput[r](-65,-45){$5$}
\rput[r](-65,-35){$4$}
\rput[r](-65,-25){$3$}
\rput[r](-65,-15){$2$}
\rput[r](-65,-5){$1$}
\rput[r](-65,5){$0$}
\rput[r](-65,15){$-1$}
\rput[r](-65,25){$-2$}
\rput[r](-65,35){$-3$}
\rput[r](-65,45){$-4$}
\rput[r](-65,55){$\vdots$}

\rput[c](-55,-65){$\cdots$}
\rput[c](-45,-65){$-4$}
\rput[c](-35,-65){$-3$}
\rput[c](-25,-65){$-2$}
\rput[c](-15,-65){$-1$}
\rput[c](-5,-65){$0$}
\rput[c](5,-65){$1$}
\rput[c](15,-65){$2$}
\rput[c](25,-65){$3$}
\rput[c](35,-65){$4$}
\rput[c](45,-65){$5$}
\rput[c](55,-65){$\cdots$}

}

\psline[linewidth=1.5pt]{-}(-40,-40)(40,-40)(40,40)(-40,40)(-40,-40)

\pscircle*(-55,55){2}
\pscircle*(-45,45){2}
\pscircle*(-35,-15){2}
\pscircle*(-25,25){2}
\pscircle*(-15,-25){2}
\pscircle*(-5,-5){2}
\pscircle*(5,5){2}
\pscircle*(15,35){2}
\pscircle*(25,-35){2}
\pscircle*(35,15){2}
\pscircle*(45,-45){2}
\pscircle*(55,-55){2}

\rput(25,15){\textcolor{red}{\tiny $5$}}
\psline[linewidth=1.5pt,linestyle=dashed,linecolor=red]{-}(-60,10)(30,10)(30,-60)

\endpspicture
\hspace{10pt}
\pspicture(-70,-70)(70,70)

\psgrid[unit=10pt,subgriddiv=0,gridlabels=0](-6,-6)(6,6)

{\tiny
\rput[r](-65,-55){$\vdots$}
\rput[r](-65,-45){$5$}
\rput[r](-65,-35){$4$}
\rput[r](-65,-25){$3$}
\rput[r](-65,-15){$2$}
\rput[r](-65,-5){$1$}
\rput[r](-65,5){$0$}
\rput[r](-65,15){$-1$}
\rput[r](-65,25){$-2$}
\rput[r](-65,35){$-3$}
\rput[r](-65,45){$-4$}
\rput[r](-65,55){$\vdots$}

\rput[c](-55,-65){$\cdots$}
\rput[c](-45,-65){$-4$}
\rput[c](-35,-65){$-3$}
\rput[c](-25,-65){$-2$}
\rput[c](-15,-65){$-1$}
\rput[c](-5,-65){$0$}
\rput[c](5,-65){$1$}
\rput[c](15,-65){$2$}
\rput[c](25,-65){$3$}
\rput[c](35,-65){$4$}
\rput[c](45,-65){$5$}
\rput[c](55,-65){$\cdots$}

}


\graybox(-60,50)
\graybox(-50,50)
\graybox(-40,50)
\graybox(-30,50)
\graybox(-20,50)
\graybox(-10,50)
\graybox(0,50)
\graybox(10,50)
\graybox(20,50)
\graybox(30,50)
\graybox(40,50)
\graybox(50,50)

\graybox(-60,40)
\graybox(-50,40)
\graybox(-40,40)
\graybox(-30,40)
\graybox(-20,40)
\graybox(-10,40)
\graybox(0,40)
\graybox(10,40)
\graybox(20,40)
\graybox(30,40)
\graybox(40,40)
\graybox(50,40)

\graybox(-60,30)
\graybox(-50,30)
\graybox(10,30)
\graybox(20,30)
\graybox(30,30)
\graybox(40,30)
\graybox(50,30)

\graybox(-60,20)
\graybox(-50,20)
\graybox(-30,20)
\graybox(-20,20)
\graybox(-10,20)
\graybox(0,20)
\graybox(10,20)
\graybox(20,20)
\graybox(30,20)
\graybox(40,20)
\graybox(50,20)

\graybox(-60,10)
\graybox(-50,10)
\graybox(-30,10)
\graybox(10,10)
\graybox(30,10)
\graybox(40,10)
\graybox(50,10)

\graybox(-60,0)
\graybox(-50,0)
\graybox(-30,0)
\graybox(0,0)
\graybox(10,0)
\graybox(20,0)
\graybox(30,0)
\graybox(40,0)
\graybox(50,0)

\graybox(-60,-10)
\graybox(-50,-10)
\graybox(-30,-10)
\graybox(-10,-10)
\graybox(0,-10)
\graybox(10,-10)
\graybox(20,-10)
\graybox(30,-10)
\graybox(40,-10)
\graybox(50,-10)

\graybox(-60,-20)
\graybox(-50,-20)
\graybox(-40,-20)
\graybox(-30,-20)
\graybox(-20,-20)
\graybox(-10,-20)
\graybox(0,-20)
\graybox(10,-20)
\graybox(20,-20)
\graybox(30,-20)
\graybox(40,-20)
\graybox(50,-20)

\graybox(-60,-30)
\graybox(-50,-30)
\graybox(-40,-30)
\graybox(-30,-30)
\graybox(-20,-30)
\graybox(-10,-30)
\graybox(0,-30)
\graybox(10,-30)
\graybox(20,-30)
\graybox(30,-30)
\graybox(40,-30)
\graybox(50,-30)

\graybox(-60,-40)
\graybox(-50,-40)
\graybox(-40,-40)
\graybox(-30,-40)
\graybox(-20,-40)
\graybox(-10,-40)
\graybox(0,-40)
\graybox(10,-40)
\graybox(20,-40)
\graybox(30,-40)
\graybox(40,-40)
\graybox(50,-40)

\graybox(-60,-50)
\graybox(-50,-50)
\graybox(-40,-50)
\graybox(-30,-50)
\graybox(-20,-50)
\graybox(-10,-50)
\graybox(0,-50)
\graybox(10,-50)
\graybox(20,-50)
\graybox(30,-50)
\graybox(40,-50)
\graybox(50,-50)

\graybox(-60,-60)
\graybox(-50,-60)
\graybox(-40,-60)
\graybox(-30,-60)
\graybox(-20,-60)
\graybox(-10,-60)
\graybox(0,-60)
\graybox(10,-60)
\graybox(20,-60)
\graybox(30,-60)
\graybox(40,-60)
\graybox(50,-60)

\pscircle*(-55,55){2}
\pscircle*(-45,45){2}
\pscircle*(-35,-15){2}
\pscircle*(-25,25){2}
\pscircle*(-15,-25){2}
\pscircle*(-5,-5){2}
\pscircle*(5,5){2}
\pscircle*(15,35){2}
\pscircle*(25,-35){2}
\pscircle*(35,15){2}
\pscircle*(45,-45){2}
\pscircle*(55,-55){2}

\rput(5,35){\textcolor{red}{\tiny $5$}}
\rput(5,15){\textcolor{red}{\tiny $4$}}
\rput(25,15){\textcolor{red}{\tiny $5$}}
\rput(-5,5){\textcolor{red}{\tiny $3$}}
\rput(-35,-5){\textcolor{red}{\tiny $1$}}
\rput(-15,-5){\textcolor{red}{\tiny $2$}}

\endpspicture

\end{center}

\caption{The permutation $w$ in $\Sgp_\Z$ given in one-line notation as $[2,-2,3,1,0,-3,4,-1]$.  The value of the rank function $k_w(3,-1)= 5$ is illustrated as the number of dots enclosed by the dashed line, at left.  The diagram and essential set are shown at right. \label{f.sz}}
\end{figure}

Schubert varieties in $\sGr$ are defined similarly, by
\[
  \Omega_\lambda = \{ E \, |\, \dim(E \cap V_{>\lambda_k-k}) \geq k \text{ for all }k\},
\]
for a partition $\lambda = (\lambda_1\geq \cdots \geq \lambda_s\geq 0)$.  As usual, it suffices to impose such conditions for $1 \leq k \leq s$, or even for those $k$ such that $\lambda_k>\lambda_{k+1}$ (since corners of the Young diagram determine the essential conditions).  These conditions also define the Schubert variety $\Omega_{w_\lambda} \subseteq \sFl$, where $w_\lambda$ is the Grassmannian permutation associated to $\lambda$.

By taking limits of finite-dimensional varieties, there is a well-defined class $[\Omega_w]$ in $H_{T}^*\sFl$.

\begin{definition}
The \define{enriched Schubert polynomial} $\sS_w(c;x;y)$ is the (unique) polynomial representing the class of the Schubert variety $\Omega_w\subseteq \sFl$.  
That is, 
\[
 \sS_w(c;x;y) = [\Omega_w]
\]
in $\Lambda[x;y] = H_{T}^*\sFl$, by definition.
\end{definition}

The enriched Schubert polynomials, by definition, are polynomials in $c$, $x$, and $y$.  Also by definition, if $m$ and $m'$ are large enough so that $w$ fixes all integers outside of $(-m,m']$, the polynomial $\sS_w(c;x;y)$ restricts to a Schubert class in the finite-dimen{\-}sional flag variety $\Fl(V_{(-m,m']})$.  So for $w\in \Sgp_{(-m,m']}$, the polynomial $\enS_w(c;x;y)$ depends only on $x_i$ and $y_i$ for $-m< i\leq m'$.  Furthermore, the (Lascoux-Sch\"utzenberger) Schubert polynomials $\SSS_v(x;-y)$ give formulas for these Schubert classes, and this proves the following:

\begin{proposition}
Suppose $w \in \Sgp_{m'}$.  Then
\[
  \enS_w( c^{(m)}; x; y) = \SSS_{1^m \times w}(x_{-m+1},\ldots,x_{m'};-y_{-m+1},\ldots,-y_{m'}),
\]
where $c^{(m)} = \prod_{i=-m+1}^0 \frac{1+y_i}{1-x_i}$.
\end{proposition}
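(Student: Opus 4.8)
The plan is to pull the class $[\Omega_w]$ back along an embedding into $\sFl$ of an infinite flag variety whose equivariant cohomology has no relations, so that the asserted identity comes out directly as an equality of polynomials rather than as a congruence. Concretely, I would use the embedding $\iota\colon\Fl_+(V_{>-m})\hookrightarrow\sFl$ extending the finite-dimensional embeddings $\Fl(V_{(-m,n]})\hookrightarrow\sFl$ recalled before the proposition: it sends a flag $D_\bullet=(0\subset D_1\subset D_2\subset\cdots)$ of finite-dimensional subspaces of $V_{>-m}$ to $E_\bullet$ with $E_{-m+j}=V_{\leq -m}\oplus D_j$ for $j\geq0$ and $E_k=V_{\leq k}$ for $k\leq -m$. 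As in the paper's treatment of $\Fl_+(V_{>0})$, the target ring is the polynomial ring $\Z[y][x_{-m+1},x_{-m+2},\dots]$ with $x_i=-c_1^T(D_{i+m}/D_{i+m-1})$. Since $w$ fixes everything outside $[1,m']\subseteq(-m,n]$, the observation that $\enS_w(c;x;y)=[\Omega_w]$ restricts to a Schubert class in each $\Fl(V_{(-m,n]})$ gives, in the limit, $\iota^*\enS_w(c;x;y)=\iota^*[\Omega_w]=[\iota^{-1}\Omega_w]$, with $\iota^{-1}\Omega_w$ again a Schubert variety.

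The first substantive step is to identify that Schubert variety. Relabelling the basis $e_{-m+1},e_{-m+2},\dots$ of $V_{>-m}$ as $\hat e_1,\hat e_2,\dots$ turns the rank conditions defining $\iota^{-1}\Omega_w$ into those of $\Omega_{1^m\times w}$; this matches the fact that the Rothe diagram of $1^m\times w$ is the diagram of $w$ translated diagonally by $(m,m)$, which is exactly the effect of prepending the block $V_{\leq -m}$. By stability of Schubert polynomials and the Lascoux--Sch\"utzenberger formula for the finite-dimensional flag varieties, the class of $\Omega_{1^m\times w}$ in $\Z[y][x_{-m+1},x_{-m+2},\dots]$ is $\SSS_{1^m\times w}(x_{-m+1},x_{-m+2},\dots;-y_{-m+1},-y_{-m+2},\dots)$, which involves only the variables through $x_{m'-1}$ and $-y_{m'-1}$ and so coincides with the right-hand side of the proposition. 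The second step is to compute $\iota^*$ on the generators of $H_T^*\sFl=\Lambda[x;y]$: one has $\iota^*x_i=x_i$, $\iota^*y_i=y_i$, and $\iota^*\tS_0=V_{\leq -m}\oplus D_m$, so $\iota^*c_j=c_j^T(V_{(-m,0]}-D_m)$; since $c^T(V_{(-m,0]})=\prod_{i=-m+1}^{0}(1+y_i)$ and $c^T(D_m)=\prod_{i=-m+1}^{0}(1-x_i)$, the total Chern class is $\prod_{i=-m+1}^{0}\frac{1+y_i}{1-x_i}$, i.e.\ $\iota^*c_j$ is the degree-$j$ part of $c^{(m)}$. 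Because $\enS_w(c;x;y)$ depends, for $w\in\Sgp_{m'}$, only on the $c$'s and on $x_i,y_i$ with $0<i\leq m'$, this yields $\iota^*\enS_w(c;x;y)=\enS_w(c^{(m)};x;y)$, an equality in the polynomial ring. Combining the two evaluations of $\iota^*[\Omega_w]$ proves the statement.

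I expect the main difficulty to be organizational rather than conceptual: carrying the window relabelling through carefully, so that $\iota^{-1}\Omega_w$ is \emph{exactly} $\Omega_{1^m\times w}$ and not a shifted or padded variant, and matching the sign and variable conventions in $\SSS_v(x;-y)$ between the Sato and finite pictures. The one point worth emphasising is the choice to restrict to $\Fl_+(V_{>-m})$ rather than to a single finite $\Fl(V_{(-m,n]})$: since the former's cohomology ring is a genuine polynomial ring, once both sides have been exhibited as honest polynomials representing $\iota^*[\Omega_w]$ the identity follows with no need to worry about the gap between representing a class modulo the ideal of relations and equality on the nose.
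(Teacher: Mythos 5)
Your proposal is correct and follows essentially the same route as the paper, whose ``proof'' is just the preceding paragraph: restrict $[\Omega_w]$ to the locus where the flag is standard below level $-m$, observe that $c$ restricts to $c^{(m)}$ and that the restricted Schubert variety is the one indexed by $1^m\times w$, and invoke the Lascoux--Sch\"utzenberger formula for finite-dimensional Schubert classes. Your added care in working inside $\Fl_+(V_{>-m})$, whose equivariant cohomology is a genuine polynomial ring, is a worthwhile explicit justification of why the resulting identity holds on the nose rather than modulo relations, but it is a refinement of the paper's argument rather than a different one.
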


\noindent
(The fact that the right-hand side is supersymmetric in the non-positive $x$ and $y$ variables, and therefore may be written in terms of $c^{(m)}$ variables, can be found in \cite[Corollary~2.5]{buch-rimanyi}.)

For example, if $k>0$ we have
\begin{align*}
 \enS_{s_k}(c^{(m)};x;y) &= x_{-m+1}+\cdots+x_k + y_{-m+1} + \cdots + y_k \\
 &= \SSS_{s_{m+k}}(x_{-m+1},\ldots,x_k;-y_{-m+1},\ldots,-y_k).
\end{align*}

For general $w\in \Sgp_\Z$, one can use translation operators to relate $\sS_w$ to $\SSS_v$, for some $v \in \Sgp_+$, as in \S\ref{ss.shift}.  (See also \cite{LLS,part1}.)

The proposition shows that the enriched Schubert polynomials $\sS_w(c;x;y)$ agree with the {\em back stable Schubert polynomials} $\bsS_w(x;-y)$ of \cite{LLS}.\footnote{To do this, one interprets $c = \prod_{i\leq 0}\frac{1+y_i}{1-x_i}$.  This series interpretation is not logically necessary for us, and we generally avoid it, since it assigns a double role to non-positive $x$ and $y$ variables.}

The {\em inverse} formula
\begin{equation}\label{e.inverse}
  \enS_{w}(c;x;y) = \enS_{w^{-1}}(\omega(c);y;x),
\end{equation}
where $\omega(c) = 1/(1-c_1+c_2-\cdots)$, follows by transposing the flags in the definition of $\Omega_w$; see  \cite[Proposition~1.2]{part1}.

Finite-dimensional Schubert classes form $\Z[y]$-module bases for each cohomology ring $H_{T}^*\Fl(m-n,\ldots,m+n;V_{(-m,m]})$.  So in the limit, the classes of $\Omega_w\subseteq \sFl$ form a $\Z[y]$-basis for $H_{T}^*\sFl$.  (As usual, one may think about compatible sequences of finite-dimensional Schubert varieties instead.)  It follows that the polynomials $\sS_w$ form a basis for $\Lambda[x;y]$ over $\Z[y]$, as $w$ ranges over $\Sgp_\Z$.  In fact, these considerations prove a more refined statement:
\begin{proposition}\label{p.variables}
Fix positive integers $n,n'$.
\begin{enumerate}[(i)]
 \item If $w(i)<w(i+1)$ for all $i<n$ and all $i>n'$, then $\enS_w(c;x;y)$ lies in the subalgebra $\Z[c,y][x_{-n+1},\ldots,x_{n'}]$.  As $w$ varies over such permutations, the enriched Schubert polynomials $\enS_w(c;x;y)$ form a basis for this subalgebra, considered as a module over $\Z[y]$.
 
 \item If $w^{-1}(i)<w^{-1}(i+1)$ for all $i<n$ and all $i>n'$, then $\enS_w(c;x;y)$ lies in the subalgebra $\Z[c,x][y_{-n+1},\ldots,y_{n'}]$.  As $w$ varies over such permutations, the enriched Schubert polynomials $\enS_w(c;x;y)$ form a basis for this subalgebra, considered as a module over $\Z[x]$.
\end{enumerate}
\end{proposition}

\noindent
(The first statement is proved by considering the Schubert basis for the partial flag variety $\Fl(\bp)$, where $\bp = (-n+1,\ldots,n')$.  The second statement is equivalent to the first by applying the inverse formula \eqref{e.inverse}.)

For Chern series $c$, $c'$, and $\bbc$ with $\bbc=c\cdot c'$, there is a Cauchy formula
\begin{equation}\label{e.cauchy}
  \enS_w(\bbc;x;y) = \sum_{vu \dot{=} w} \enS_u(c;x;t)\, \enS_v(c';-t;y),
\end{equation}
where $vu\dot{=}w$ means $vu=w$ and $\ell(u)+\ell(v)=\ell(w)$ \cite{LLS,LLS2,part1}.

Following \cite[\S4.6]{LLS}, by specializing $x_i=-y_i$ for all $i$ one obtains the \define{double Stanley polynomials}
\begin{equation}\label{e.stanley}
  F_w(c;y) = \enS_w(c;-y;y).
\end{equation}
More generally, there are polynomials $F_w^v(c;y) = \enS_w(c;-y^v;y)$ obtained by specialization $x_i=-y_{v(i)}$.  
Further specializing the $y$ variables to zero recovers the ``stable Schubert'' formulation of the Stanley symmetric functions, $F_w(c) = \enS_w(c;0;0)$.

For Grassmannian permutations, the Schubert polynomials have a determinantal (Kempf-Laksov) formula:
\begin{align} \label{e.kempf-laksov}
 \enS_{w_\lambda}(c;x;y) &= \det( c(i)_{\lambda_i-i+j} )_{1\leq i,j\leq s} 
\end{align}
where
\begin{align*}
  c(i) &= c\cdot \frac{\prod_{j\leq \lambda_i-i}( 1+y_i ) }{\prod_{j\leq 0} (1+y_i) } \\
       &= c \cdot c^T(V_{\leq \lambda_i-i} - V_{\leq 0}).
\end{align*}
These evaluate to double Schur functions\footnote{Under the evaluation $c= \prod_{i\leq 0}\frac{1+y_i}{1-x_i}$, some authors write these as $s_{\lambda}(x/y|\!|{-y})$, notation we avoid in the present context.} $s_\lambda(c|{y})$, with \eqref{e.kempf-laksov} becoming a variation of the Jacobi-Trudi formula.

More generally, any {\it vexillary} permutation $w=w(\triple)$ in $\Sgp_\Z$ has a similarly explicit determinantal formula (see \cite{part1}); for example, for any $m<n$, the permutation $w_{\circ}^{(m,n)} = [n,n-1,\ldots,m]$ is vexillary.  Any $w\in \Sgp_\Z$ lies in $\Sgp_{[m,n]}$ for some $m<n$, so $w\leq w_{\circ}^{(m,n)}$.  Any enriched Schubert polynomial may therefore be computed from the explicit formula for $\enS_{w_\circ^{(m,n)}}$ using the divided difference recursion
\[
  \partial^x_i\enS_w = \begin{cases} \enS_{ws_i} & \text{if } ws_i<w; \\ 0& \text{if }ws_i>w. \end{cases}
\]
Here $\partial^x_i$ is the usual divided difference operator acting on $x$ variables, so for any $f\in \Z[c,x,y]$ and any $i\in\Z$,
\[
  \partial^x_i f = \frac{f(\ldots,x_i,x_{i+1},\ldots)-f(\ldots,x_{i+1},x_i,\ldots)}{x_i-x_{i+1}}.
\]

In what follows, we study further algebraic properties of the polynomials $\enS_w$ using the geometry of $\sFl$.

\section{Degeneracy loci}

The enriched Schubert polynomials represent classes of degeneracy loci.  By taking a sufficiently general base variety $X$, they may be characterized uniquely by this property.  Precedents for the setup we consider can be traced to \cite{flags}, and especially \cite{buch-fulton}.

On a nonsingular variety $X$, we have a vector bundle $V$ of rank $m+n$, with flags
\[
  E_\bullet: \cdots \subset E_{-1}\subset E_{0}\subset E_1 \subset E_2 \subset \cdots \subset V
\]
and
\[
  F_\bullet: \cdots \subset F_{1}\subset F_{0}\subset F_{-1} \subset F_{-2} \subset \cdots \subset V,
\]
indexed so that $\rk E_0 = \rk F_0 = m$.  (So $\rk E_p = m+p$ and $\rk F_q = m-q$.)

For $w\in \Sgp_{(-m,n]}$, there is a degeneracy locus
\[
  D_w(E_\bullet \cap F_\bullet) = \{x\in X\,|\, \dim(E_p\cap F_q) \geq k_w(p,q) \text{ for all }p,q \}
\]
in $X$.  As usual, it suffices to impose conditions $\dim(E_p\cap F_q) \geq k$ for $(k,p,q)$ in the essential set.

\begin{theorem}\label{t.intersection}
Assume $D_w(E_\bullet\cap F_\bullet) \subseteq X$ has codimension $\ell(w)$.  Under the evaluations
\[
  c\mapsto c(V-E-F), \quad x_i \mapsto -c_1(E_i/E_{i-1}), \quad y_i \mapsto c_1(F_{i-1}/F_i),
\]
the enriched Schubert polynomial $\sS_w(c;x;y)$ maps to the class $[D_w(E_\bullet\cap F_\bullet)]$ in $H^*X$.
\end{theorem}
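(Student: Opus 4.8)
The strategy is to reduce the theorem to its finite-dimensional incarnation, where degeneracy-locus formulas of Kempf-Laksov / Fulton type are already established, and then observe that the universal nature of the enriched Schubert polynomial makes the reduction automatic. Concretely, I would proceed as follows.

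\textbf{Step 1: Reduce to a universal situation.} Since $w \in \Sgp_{(-m,n]}$, the conditions defining $D_w(E_\bullet \cap F_\bullet)$ only involve $E_p$ for $-m < p \leq n$ and $F_q$ for $-n \leq q < m$, i.e.\ only the finite flags $E_{-m+1} \subset \cdots \subset E_n$ and $F_{m-1} \subset \cdots \subset F_{-n}$ inside $V$, together with the bundle $V$ of rank $m+n$. These data are classified by a map $f\colon X \to Y$, where $Y = \Fl(1,\ldots,m+n;\,\V) \times_B \Fl(1,\ldots,m+n;\,\V)$ is the relative product of two full-flag bundles over a base $B$ carrying the universal rank-$(m+n)$ bundle $\V$ (one may take $B$ to be a suitable finite-dimensional approximation of $B T'$ for $T' = (\C^*)^{m+n}$, or simply the classifying space of $GL_{m+n}$). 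On $Y$ one has the universal flags $\mathcal{E}_\bullet$ and $\mathcal{F}_\bullet$ and a universal locus $D_w(\mathcal{E}_\bullet \cap \mathcal{F}_\bullet)$. The point is that the evaluation map in the statement factors through $H^* Y$: the Chern classes $c_1(E_i/E_{i-1})$, $c_1(F_{i-1}/F_i)$ and $c(V-E-F)$ are pulled back along $f$ from their universal counterparts on $Y$. So it suffices to prove the formula on $Y$ itself.

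\textbf{Step 2: Identify $Y$ (a Zariski-open piece) with a finite-dimensional flag variety and match up $\sS_w$ with a finite Schubert class.} On the locus of $Y$ where $\mathcal{E}_\bullet$ and $\mathcal{F}_\bullet$ are in relative general position at the two ends, or after passing to a large enough finite approximation, $Y$ is (up to affine bundles, which do not affect cohomology) the flag bundle $\Fl(m-n,\ldots,m+n;\V)$ with its tautological flag, equipped with the opposite flag $F_\bullet$. But this is exactly the finite-dimensional flag variety appearing in the construction of $\sFl$ in Section~\ref{s.sato}: by the discussion preceding the Definition of $\enS_w$, the class $[\Omega_w] = \enS_w(c;x;y)$ restricts on $\Fl(V_{(-m,n]})$ to the class of the finite Schubert variety, and that finite Schubert variety is precisely $D_w(\mathcal{E}_\bullet \cap \mathcal{F}_\bullet)$ for the tautological and opposite flags, under the substitution $x_i \mapsto -c_1(\mathcal{E}_i/\mathcal{E}_{i-1})$, $y_i \mapsto c_1(\mathcal{F}_{i-1}/\mathcal{F}_i)$, $c \mapsto c(V - \mathcal{E} - \mathcal{F})$ (the last because in the finite picture $\tS_0$ and $V_{\leq 0}$ play the roles of $E$ and $F$, and $c_i = c^T_i(V_{\leq 0} - \tS_0)$). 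Thus the universal formula on $Y$ holds.

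\textbf{Step 3: Pull back via $f$, using the codimension hypothesis.} Finally, pull back the universal identity $[D_w(\mathcal{E}_\bullet \cap \mathcal{F}_\bullet)] = \enS_w(c;x;y)$ along $f\colon X \to Y$. Pullback of Chern classes is functorial, so the right-hand side becomes the desired evaluation in $H^* X$. For the left-hand side, one needs $f^*[D_w(\mathcal{E}_\bullet \cap \mathcal{F}_\bullet)] = [D_w(E_\bullet \cap F_\bullet)]$; this is exactly where the hypothesis $\codim D_w(E_\bullet \cap F_\bullet) = \ell(w)$ enters—it guarantees that $f$ meets the universal locus properly (in the appropriate generality, the universal locus is Cohen-Macaulay of the expected codimension, so properness of intersection suffices for the pullback of fundamental classes), so that the pullback of the fundamental class is the fundamental class of the pullback.

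\textbf{Main obstacle.} The genuinely delicate point is Step~3: making precise the sense in which $f$ can be taken generic enough, and verifying that proper intersection (codimension $\ell(w)$) is enough to conclude $f^*[D_w] = [D_w(f)]$ when the universal degeneracy locus is not smooth but only Cohen-Macaulay. This is the standard ``universality'' argument for degeneracy loci, and the required input—that the universal locus in $\Fl \times \Fl$ (or in the flag bundle with a fixed opposite flag) has rational singularities, hence is Cohen-Macaulay of the expected dimension, so that its class pulls back correctly under proper intersection—is exactly the content of the finite-dimensional degeneracy locus theory of \cite{flags} and \cite{buch-fulton}, which I am entitled to invoke. Everything else is bookkeeping: matching indices between the degeneracy-locus conventions ($\rk E_p = m+p$, $\rk F_q = m-q$) and the Sato-flag conventions, and checking that the substitution $c \mapsto c(V-E-F)$ is the right one, which follows by comparing $c^T_i(V_{\leq 0} - \tS_0)$ on the model flag variety with $c_i(V - E - F)$ under the identification of Step~2.
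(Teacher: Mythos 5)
Your proposal is correct and follows essentially the same route as the paper's own argument: construct a classifying map $f$ from $X$ to a flag bundle over an approximation of the classifying space, identify $D_w(E_\bullet\cap F_\bullet)$ as $f^{-1}\Omega_w$ so that the formula follows from $[\Omega_w]=\sS_w(c;x;y)$, and use the expected-codimension hypothesis together with Cohen--Macaulayness of the universal locus to pull back fundamental classes. The paper's version is slightly cleaner in that it fixes $F_\bullet$ as the pullback of the standard flag $V_{>\bullet}$ from $\mathds{B}$ (avoiding your general-position caveat in Step~2), but this is only a difference in bookkeeping.
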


This is proved in \cite{part1}.  It can also be deduced directly from the formula for $[\Omega_w]$, as follows.  Choose an approximation of the classifying space $\mathds{B}$ for $T$ so that the vector bundle $V$ and flag $F_\bullet$ are pulled back from tautological bundles on $\mathds{B}$, and $F_q$ is the pullback of $V_{>q}$.  Take the flag bundle $\sFl \to \mathds{B}$ over that classifying space, constructing $f\colon X \to \sFl$ so that $E_\bullet$ is pulled back from the tautological $\tS_\bullet$.  Then $D_w(E_\bullet \cap F_\bullet) = f^{-1}\Omega_w$.  More details appear in \cite[Chapters 11--12]{ecag}.

\section{Fixed points}\label{s.fp}

Recall that $T=\prod_{i\in \Z} \C^*$ acts on $V$ by scaling coordinates.  To describe the $T$-fixed points of the various infinite flag varieties, we need to say more about permutations of $\Z$.

First, for any sets $X$ and $Y$, let $\Inj(X,Y)$ be the set of all injections from $X$ into $Y$, and let $\Inj(X)$ be the monoid of injections from $X$ into itself.  So $\Bij(X)\subset \Inj(X)$ is a subgroup.

As usual, we are concerned with subsets of $\Z$.  The submonoid $\Inj^0(\Z)\subset \Inj(\Z)$ consists of all $w$ such that
\[
 \#\{i\leq 0 \,|\, w(i)>0\} = \#\{i> 0 \,|\, w(i)\leq0\},
\]
and both these sets are finite.  (That is, $w$ has finitely many sign changes, and they are balanced.)  
Any $w\in \Inj^0(\Z)$  also has $\#\{i\leq k \,|\, w(i)>0\} - \#\{i> k \,|\, w(i)\leq0\}=k$ for any integer $k$.

The set $\Inj(\Z_{>0})$ may be constructed as the inverse limit of $\Inj( [1,n], \Z_{>0} )$ over $n>0$.  This mirrors the construction of $\Fl_+(V_{>0})$, and shows that the $T$-fixed points of $\Fl_+(V_{>0})$ are indexed by $w\in \Inj(\Z_{\leq 0})$: they are precisely the flags determined by the ordered bases $e_{w(1)}, e_{w(2)}, \ldots$, so the $k$-dimensional component is the span of $e_{w(i)}$ for $1\leq i\leq k$.

Similarly, the $T$-fixed points of $\Fl_-(V_{\leq0})$ are indexed by $w\in \Inj(\Z_{\leq 0})$, so the codimension $k$ component is defined by $e_{w(i)}^*=0$ for $-k<i\leq 0$.  Equivalently, it is the span of $e_{w(i)}$ for $i\leq k$, together with all $e_i$ for $i\leq 0$ not in the image of $w$.  So the flag varieties $\Fl_+$ and $\Fl_-$ have uncountably many fixed points.

The fixed points of the Sato Grassmannian $\sGr$, on the other hand, are (countably) indexed by partitions $\lambda$, or equivalently by Grassmannian elements $w_\lambda\in \Sgp_\Z$.  The fixed subspace corresponding to $\lambda$ is spanned by $e_{w_\lambda(i)}$ for $i\leq0$.    (See also \cite[\S 7]{pressley-segal}.)

The fixed points of the Sato flag variety $\sFl$ are indexed by $w\in \Inj^0(\Z)$.  A fixed flag is determined by the ordered basis $\ldots,e_{w(-1)},e_{w(0)},e_{w(1)},\ldots$, so its $k$th component is the span of $e_{w(i)}$ for $i\leq k$, together with all $e_i$ for $i\leq 0$ not in the image of $w$.

The formula defining $k_w(p,q)$ works verbatim for any $w\in \Inj^0(\Z)$, because the set it enumerates is finite for such $w$.  Using this, one can extend the definition of Bruhat order from $\Sgp_\Z$ to $\Inj^0(\Z)$.

Generally, we write $p_w$ for the point corresponding to a fixed flag, also using $p_\lambda = p_{w_\lambda}$ for points in $\sGr$.

From the definitions of Schubert varieties and Bruhat order, one sees that
\[
  p_v \in \Omega_w \quad \text{ iff } \quad v \geq w.
\]
Here, as usual, we assume $w\in\Sgp_\Z$, but $v$ varies over $\Inj^0(\Z)$.

Formulas for restricting a Schubert class to a fixed point follow from the finite-dimensional case.  We have
\begin{align} \label{e.interpolate1}
  [\Omega_w]|_{p_w} &= \mathop{\prod_{i<j}}_{w(i)>w(j)} (y_{w(i)}-y_{w(j)})
\end{align}
and, for any $v\in\Inj^0(\Z)$,
\begin{align}\label{e.interpolate2}
 [\Omega_w]|_{p_v} &= 0 \quad \text{ if } v\not\geq w.
\end{align}

For $v\in \Inj^0(\Z)$, let
\[
 c^v = \mathop{\prod_{i\leq0, v(i)>0}}_{j>0, v(j)\leq0}\frac{ 1+y_{v(j)} }{ 1+y_{v(i)} } \quad \text{and}\quad y^v_i = y_{v(i)}.
\]
(Note that $c^v$ is a finite product.)

\begin{proposition}
The enriched Schubert polynomial $\sS_w(c;x;y)$ satisfies the specialization formulas
\[
  \sS_w(c^w;-y^w;y) = \mathop{\prod_{i<j}}_{w(i)>w(j)} (y_{w(i)}-y_{w(j)})
\]
and, for $v\in\Inj^0(\Z)$,
\[
  \sS_w(c^v;-y^v;y) = 0 \quad \text{ if } v\not\geq w.
\]
These properties, as $v$ ranges over $\Sgp_\Z$, determine $\sS_w(c;x;y)$ uniquely.
\end{proposition}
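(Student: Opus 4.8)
The plan is to identify the restriction of a cohomology class on $\sFl$ to a torus-fixed point $p_v$ with a substitution of variables, after which the two displayed formulas become exactly \eqref{e.interpolate1} and \eqref{e.interpolate2}; the uniqueness clause is then the standard Bruhat-order interpolation argument, using that the $\sS_w$ form a $\Z[y]$-basis of $\Lambda[x;y]$.

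First I would make the restriction map $H_T^*\sFl=\Lambda[x;y]\to H_T^*(p_v)=\Z[y]$ explicit, for $v\in\Inj^0(\Z)$. At $p_v$ the quotient bundle $\tS_i/\tS_{i-1}$ is the weight line $\C\cdot e_{v(i)}$, so $x_i=-c_1^T(\tS_i/\tS_{i-1})$ restricts to $-y_{v(i)}=-y^v_i$. The virtual bundle $V_{\leq0}-\tS_0$ restricts to the virtual $T$-module with character $\sum_{i>0,\,v(i)\leq0}y_{v(i)}-\sum_{i\leq0,\,v(i)>0}y_{v(i)}$ --- the finitely many weights that survive cancellation, since $v$ has finitely many, balanced sign changes --- so $c=c^T(V_{\leq0}-\tS_0)$ restricts to $c^v$. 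Hence the restriction to $p_v$ of any $P\in\Lambda[x;y]$ equals $P(c^v;-y^v;y)$. Since $\sS_w=[\Omega_w]$ by definition, the first displayed formula is \eqref{e.interpolate1} at $v=w$, and the second is \eqref{e.interpolate2} for $v\not\geq w$, both read through this identification.

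For uniqueness, recall that the $\sS_u$ ($u\in\Sgp_\Z$) form a $\Z[y]$-basis of $\Lambda[x;y]$, each homogeneous of degree $\ell(u)$. I would supplement the hypotheses with homogeneity of degree $\ell(w)$ (which $\sS_w$ has): this is necessary, since $\sS_w+\sS_u$ for any $u>w$ also satisfies both displayed formulas. Given such a $P=\sum_u a_u\sS_u$ (a finite sum, $a_u\in\Z[y]$), suppose the finite set $\{u:a_u\neq0,\ u\not\geq w\}$ is nonempty and pick a minimal element $u_0$. Specializing $P$ at $p_{u_0}$ kills every $\sS_u|_{p_{u_0}}$ with $u\not\leq u_0$ by \eqref{e.interpolate2}, while any $u<u_0$ with $a_u\neq0$ must satisfy $u\geq w$ by minimality of $u_0$, forcing $w\leq u<u_0$ and contradicting $u_0\not\geq w$; so $0=P|_{p_{u_0}}=a_{u_0}\cdot[\Omega_{u_0}]|_{p_{u_0}}$, and since $[\Omega_{u_0}]|_{p_{u_0}}=\prod_{i<j,\,u_0(i)>u_0(j)}(y_{u_0(i)}-y_{u_0(j)})$ is a nonzero element of the integral domain $\Z[y]$, we get $a_{u_0}=0$, a contradiction. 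Thus $a_u\neq0\Rightarrow u\geq w$; specializing at $p_w$ gives $a_w=1$ in the same way; and for $u>w$ the term $a_u\sS_u$ has all monomials of degree $\geq\ell(u)>\ell(w)=\deg P$, so homogeneity of $P$ forces $a_u=0$. Hence $P=\sS_w$.

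I expect the only genuine obstacle to be the uniqueness clause: the two interpolation conditions as literally displayed do not pin down $\sS_w$ (witness $\sS_w+\sS_u$ for any $u>w$), so the statement has to be read as characterizing $\sS_w$ among polynomials of degree $\leq\ell(w)$ --- equivalently, among homogeneous polynomials of degree $\ell(w)$. Granting that, the rest is routine: the fixed-point restriction is the substitution computed in the second paragraph, and the remaining argument is the familiar induction on Bruhat order, whose only inputs are \eqref{e.interpolate1}, \eqref{e.interpolate2}, the basis property of the $\sS_u$, and the fact that $\Z[y]$ is a domain.
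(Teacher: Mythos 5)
Your proof is correct and follows essentially the same route the paper sketches: identify restriction to the fixed point $p_v$ with the substitution $c\mapsto c^v$, $x\mapsto -y^v$ (your weight computation for $V_{\leq 0}-\tS_0$ is exactly how $c^v$ arises), so the two displayed formulas are \eqref{e.interpolate1} and \eqref{e.interpolate2}, and then run the standard Bruhat-order interpolation argument for uniqueness, which the paper delegates to the finite-dimensional case. Your caveat about the uniqueness clause is well taken: as literally stated the two conditions do not pin down $\sS_w$ (your example $\sS_w+\sS_u$ with $u>w$ does satisfy both), and the intended reading — left implicit in the paper, as in the usual finite-dimensional interpolation characterization — is uniqueness among homogeneous classes of degree $\ell(w)$, which your argument then establishes correctly.
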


The fact that these properties are satisfied follows from the corresponding properties of Schubert classes.  The proof that they uniquely determine a Schubert class also follows from the finite-dimensional case, by taking a sufficiently large approximation.  One only needs to let $v$ vary over $\Sgp_\Z$ (rather than all fixed points), because specializations of $\sS_w(c;x;y)$, involving only finitely many variables, are insensitive to the difference between $\Sgp_\Z$ and $\Inj^0(\Z)$.

\begin{remark}
Using the identification with $T$-fixed points of $\sFl$, the topology induced on $\Inj^0(\Z)$ is not discrete, but rather a limit of discrete sets.  The subgroup $\Sgp_\Z \subset \Inj^0(\Z)$ is dense, and this is another reason that fixed points indexed by $\Sgp_\Z$ suffice to determine Schubert polynomials.
\end{remark}

\begin{remark}
Later we will need to consider smaller torus actions.  Just as for finite-dimen{\-}sional flag varieties, such actions may have larger fixed loci.  In particular, we will use $T$ acting diagonally on $\bV = V\oplus V$, so each weight space is $2$-dimensional.  The fixed loci for the corresponding actions on $\sGr(\bV)$ and $\sFl(\bV)$ have infinite-dimensional components.
\end{remark}

\section{Duality, projection, and shift morphisms}

A major advantage of working with $\sGr$ and $\sFl$ is that new morphisms become evident.  As usual, these can also be described using only finite-dimensional varieties, but it is often clearer to think about the infinite flag varieties.

\subsection{Duality}\label{ss.duality}

Fix a linear isomorphism $f\colon V \xrightarrow{\sim} V^{*'}$, where as before $V^{*'} \subset V^*$ is the restricted dual.  For any subspace $E\subseteq V$, one has the associated orthogonal complement
\[
  E^\perp = \{ v\in V \,|\, f(u)(v) = 0 \text{ for all } u \in E \}.
\]
This operation reverses inclusion, so the image of the standard flag is given by the spaces $V^\perp_{\leq -k}$.

There is a \define{duality morphism}
\[
  \sGr^k(V; V_{\leq\bullet} ) \to \sGr^{-k}(V; V^\perp_{\leq-\bullet}),
\]
by $E \mapsto E^\perp$.

The same formula defines an automorphism of $\sFl(V)$, sending a flag with components $E_k$ to one with components $E_{-k}^\perp$.

From now on, we assume the isomorphism $f\colon V \to V^{*'}$ is given by the skew-symmetric form sending $e_i \mapsto  e_{1-i}^*$ for $i>0$, and $e_i \mapsto -e_{1-i}^*$ for $i\leq 0$.  In this case, the duality morphism is an involution, equivariant with respect to the automorphism of $T$ defined on characters by $y_i \mapsto -y_{1-i}$, and the standard flag is preserved, with $(V_{\leq k})^\perp = V_{\leq -k}$.  (All of this holds as well for a symmetric form.)

The induced automorphism $\omega$ of $H_T^*\sFl =\Lambda[x;y]$ is given by
\[
  \omega(c) = 1/(1-c_1+c_2-\cdots), \quad \omega(x_i) = -x_{1-i}, \quad \omega(y_i)=-y_{1-i}.
\]
The same notation is used for the automorphism of $\Sgp_\Z$, defined by $\omega(w)(i) = 1-w(1-i)$.  One checks that $k_{\omega(w)}(p,q) = k_w(-p,-q)$, so the duality morphism sends $\Omega_w$ to $\Omega_{\omega(w)}$.  It follows that
\[
  \omega(\enS_w(c;x;y)) = \enS_{\omega(w)}(c;x;y).
\]

Following \cite{LLS}, one defines $\SSS_w(x;y)$ for any $w\in \Sgp_{\neq 0}$ using the duality involution: for $w=w_-\cdot w_+$, with $w_-\in \Sgp_-$ and $w_+\in \Sgp_+$, one defines $\SSS_w = \omega(\SSS_{\omega(w_-)})\cdot \SSS_{w_+}$.

\subsection{Projections}
For each $k$, there is a \emph{projection} $\pi_k\colon \sFl \to \sGr^k$, sending $E_\bullet$ to $E_k$.  This is a fiber bundle, and the fiber over $V_{\leq k}\in \sGr^k$ is $\Fl_-(V_{\leq k})\times \Fl_+(V_{>k})$.  In particular, the inclusion $\Lambda[y] \hookrightarrow \Lambda[x;y]$ corresponds to $\pi_0^*$, and the homomorphism
\[
  \Lambda[x;y] \to \Z[x;y], \qquad c\mapsto 1
\]
corresponds to restriction to the fiber over $V_{\leq 0}\in \sGr$.

\begin{proposition}
If $w\in \Sgp_\Z$ is not in $\Sgp_{\neq0}$, then $\enS_w(1;x;y)=0$.  If $w=w_+\cdot w_-\in \Sgp_{\neq0}$, then $\enS_w(1;x;y)=\SSS_{w}(x;-y)$.
\end{proposition}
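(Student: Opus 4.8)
The plan is to interpret the specialization $c \mapsto 1$ geometrically, as restriction to the fiber of the projection $\pi_0\colon \sFl \to \sGr$ over the base point $V_{\leq 0}\in\sGr$, and then recognize that fiber as an ordinary flag variety whose Schubert classes are given by Lascoux--Sch\"utzenberger polynomials. Concretely, $\pi_0^{-1}(V_{\leq 0}) = \Fl_-(V_{\leq 0})\times \Fl_+(V_{>0})$, and the restriction homomorphism $H_T^*\sFl = \Lambda[x;y] \to \Z[x;y] = H_T^*(\Fl_-(V_{\leq 0})\times \Fl_+(V_{>0}))$ is precisely $c\mapsto 1$ (keeping $x$ and $y$), as already noted just before the statement. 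So it suffices to compute the restriction of the Schubert class $[\Omega_w]$ to this fiber.

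First I would analyze $\Omega_w \cap \pi_0^{-1}(V_{\leq 0})$ set-theoretically. A flag $E_\bullet$ in the fiber has $E_0 = V_{\leq 0}$, so it is a pair $(E_{<0}, E_{>0})$ with $E_{<0}$ a descending flag in $V_{\leq 0}$ and $E_{>0}$ an ascending flag in $V_{>0}$; equivalently it decomposes as $E_\bullet = E_\bullet^- \oplus E_\bullet^+$ relative to $V = V_{\leq 0}\oplus V_{>0}$. The Schubert conditions $\dim(E_p\cap V_{>q})\ge k_w(p,q)$ then split: for such a decomposed flag, $E_p \cap V_{>q}$ behaves additively across the two blocks whenever $V_{>q}$ respects the splitting, which happens for $q=0$ and otherwise reduces the condition to one block. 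The key point is that the locus $\Omega_w\cap\pi_0^{-1}(V_{\leq 0})$ is nonempty exactly when the base point $V_{\leq 0}$, viewed as a point of $\sGr$, lies in the image of $\Omega_w$ under $\pi_0$ — and by the fixed-point criterion $p_v\in\Omega_w \iff v\ge w$ (applied with $v = \id$, whose associated partition/Grassmannian element corresponds to $V_{\leq 0}$), this holds iff $w$ preserves the positive and non-positive integers, i.e. $w\in\Sgp_{\ne 0}$. If $w\notin\Sgp_{\ne 0}$ the intersection is empty, so the restricted class vanishes, giving $\enS_w(1;x;y)=0$.

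When $w = w_+\cdot w_-\in\Sgp_{\ne 0}$, the second step is to identify the restricted class. Here $w_-\in\Sgp_-$ acts on $V_{\leq 0}$ and $w_+\in\Sgp_+$ acts on $V_{>0}$, and $\Omega_w$ restricted to the fiber is the product of the Schubert variety $\Omega_{w_-}\subseteq \Fl_-(V_{\leq 0})$ with $\Omega_{w_+}\subseteq\Fl_+(V_{>0})$. Since $\pi_0$ is a fiber bundle and (after a large finite-dimensional approximation) the intersection of $\Omega_w$ with a fiber is transverse of the expected codimension $\ell(w)=\ell(w_-)+\ell(w_+)$, the restriction of $[\Omega_w]$ is $[\Omega_{w_-}]\otimes[\Omega_{w_+}]$ in $H_T^*\Fl_-(V_{\leq 0})\otimes H_T^*\Fl_+(V_{>0}) = \Z[x_{\le 0};y_{\le 0}]\otimes\Z[x_{>0};y_{>0}]$. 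By the finite-dimensional theory, $[\Omega_{w_+}]$ is the Lascoux--Sch\"utzenberger polynomial $\SSS_{w_+}(x;-y)$ (in the positive variables), and $[\Omega_{w_-}]$ is $\omega(\SSS_{\omega(w_-)})$ in the non-positive variables — which is exactly how $\SSS_{w}(x;-y)$ was defined for $w\in\Sgp_{\ne 0}$ in \S\ref{ss.duality}, as $\SSS_w = \omega(\SSS_{\omega(w_-)})\cdot\SSS_{w_+}$. Hence $\enS_w(1;x;y) = \SSS_w(x;-y)$.

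The main obstacle is the transversality/expected-codimension claim: one must check that intersecting $\Omega_w\subseteq\sFl$ with the fiber $\pi_0^{-1}(V_{\leq 0})$ behaves well enough that the pullback of the class is the class of the intersection, and that this intersection is genuinely the product Schubert variety rather than something larger. I would handle this by descending to a finite-dimensional approximation $\Fl(m-n,\dots,m+n;V_{(-m,m]})$, where $\pi_0$ becomes the projection to $\Gr(m;V_{(-m,m]})$, the fiber over $V_{\leq 0}$ is $\Fl_-(V_{\leq 0})\times\Fl_+(V_{(0,m]})$, and $\Omega_w$ is a usual Schubert variety defined by the opposite flag $V_{>\bullet}$; there the splitting of rank conditions along $V = V_{\leq 0}\oplus V_{(0,m]}$ is a routine (if slightly tedious) diagram-chase on the Rothe diagram of $w = w_+\cdot w_-$, and the codimension count is additive precisely because $\ell(w)=\ell(w_+)+\ell(w_-)$ for $w\in\Sgp_{\ne 0}$. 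Taking the limit over $m,n$ then yields the statement; alternatively, one can cite Theorem~\ref{t.intersection}, applying it on $X = \Fl_-(V_{\le 0})\times\Fl_+(V_{>0})$ with the tautological flags and the evaluation $c\mapsto c(V - E - F)$, noting $V = E_0\oplus F_0$ on this space forces $c(V-E-F)=1$.
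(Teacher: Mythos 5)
Your proposal follows essentially the same route as the paper: restrict to the fiber $\pi_0^{-1}(V_{\leq 0})$ via $c\mapsto 1$, show $\Omega_w\cap\pi_0^{-1}(V_{\leq 0})$ is empty when $w\notin\Sgp_{\neq 0}$, and identify it with $\Omega_{w_-}\times\Omega_{w_+}$ (hence with $\SSS_w(x;-y)$ by the definition of $\SSS_w$ for $w\in\Sgp_{\neq 0}$) otherwise. The one loose spot is your appeal to the criterion $p_v\in\Omega_w\iff v\geq w$ ``with $v=\id$'', which would only detect whether the standard flag lies in $\Omega_w$ (i.e.\ whether $w=\id$); the correct emptiness argument---and the one the paper uses---is that $w\notin\Sgp_{\neq 0}$ forces $k_w(0,0)>0$, while every point (equivalently, every fixed point $p_v$ with $v\in\Inj(\Z_{\leq 0})\times\Inj(\Z_{>0})$) of the fiber has $\dim(E_0\cap V_{>0})=k_v(0,0)=0$, a fact already implicit in your splitting discussion.
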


\begin{proof}
For the first statement, we show that $\Omega_w \cap \pi_0^{-1}(V_{\leq0})$ is empty.  It suffices to show the fixed-point sets of $\Omega_w$ and $\pi_0^{-1}(V_{\leq0})$ are disjoint.  Since $w\not\in \Sgp_{\neq0}$, at least one $i\leq0$ has $w(i)>0$.  That is, $k_w(0,0)>0$.  The fixed points in $\pi_0^{-1}(V_{\leq0}) = \Fl_-(V_{\leq 0})\times \Fl_+(V_{>0})$ are $p_v$, for $\Inj(\Z_{\leq 0}) \times \Inj(\Z_{>0})$.  Each such $v$ has $k_v(0,0)=0$.  So $v\not\geq w$, and therefore $p_v \not\in \Omega_w$.

The second statement follows from the fact that  $\Omega_w \cap \pi_0^{-1}(V_{\leq0}) = \Omega_{w_-} \times \Omega_{w_+}$ inside $\pi_0^{-1}(V_{\leq0}) = \Fl_-(V_{\leq 0}) \times \Fl_+(V_{>0})$, together with the definition of $\SSS_w$.
\end{proof}

\subsection{Shift}\label{ss.shift}
Let $\sh \colon V \to V$ be the linear automorphism given by $e_i \mapsto e_{i-1}$.  This induces \emph{shift morphisms}, also written $\sh\colon \sGr^{k} \to \sGr^{k-1}$, sending $E\subset V$ to $\sh(E)\subset V$, and an automorphism $\sh\colon \sFl \to \sFl$, defined by $\sh(E_\bullet)_k = \sh(E_{k+1})$.  The shift morphisms are equivariant with respect to a similar automorphism of $T = \prod_{i\in\Z} \C^*$, sending $z_i \mapsto z_{i-1}$.

To construct the shift morphism from finite-dimensional varieties, one uses the system of maps
\begin{align*}
  \Gr(m+k, V_{(-m,m]}) &\hookrightarrow \Gr(m+k,V_{(-m-1,m+1]}) \\
   (E\subset V_{(-m,m]}) &\mapsto ( \sh(E) \subset V_{(-m-1,m+1]}).
\end{align*}
Taking the union over $m$ on each side determines a morphism $\sGr^k \rightarrow \sGr^{k-1}$.

Pullback by the shift morphism gives the \emph{translation operator} $\gamma \colon \Lambda[x;y] \to \Lambda[x;y]$ on cohomology.  Explicitly, $\gamma=\sh^*$ is given by
\begin{align*}
  \gamma(x_i) &= x_{i+1},\\
  \gamma(y_i) &= y_{i+1}, \text{ and}\\
  \gamma(c_k)  &= \sum_{p=0}^k c_p\,x_1^{k-p} + y_1\sum_{p=0}^{k-1} c_{p}\,x_1^{k-1-p}.
\end{align*}
(The action on $c$ variables can be written concisely as $\gamma(c) = c\cdot \frac{1+y_1}{1-x_1}$.)  The action on $x$ variables comes from $\sh^*(\tS_i) = \tS_{i+1}$, and the $y$ variables are determined by the automorphism of $T$.  For the $c$ variables, one observes $\sh^*(V_{\leq 0}) = V_{\leq 1}$, so
\[
 \sh^*c^T(V_{\leq0}-\tS_0) = c^T(V_{\leq 1} - \tS_1) = c^T(V_{\leq0}-\tS_0)\cdot c^T(\C\cdot e_1 - \tS_1/\tS_0).
\]

The homomorphism $\gamma$ is invertible.  For any $m\in\Z$, one has $\gamma^m(x_i)=x_{i+m}$ and $\gamma^m(y_i)=y_{i+m}$, with the action on $c$ variables determined by
\begin{align*}
  \gamma^m(c)  &= \begin{cases} c\cdot \prod_{i=1}^m\frac{ 1+y_i }{ 1-x_i } & \text{if } m\geq 0; \\ c\cdot \prod_{i=m+1}^0\frac{ 1-x_i }{ 1+y_i } & \text{if } m< 0. \end{cases}
\end{align*}

For any $w\in\Inj(\Z)$, the injection $\gamma^m(w)$ is defined by $\gamma^m(w)(i) = m+w(i-m)$.

\begin{proposition}
We have $\gamma^m( \sS_w(c;x;y) ) = \sS_{\gamma^m(w)}(c;x;y)$, for any $m\in\Z$ and $w\in\Sgp_\Z$.
\end{proposition}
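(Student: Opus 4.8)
The plan is to establish the identity geometrically: show that the $m$-fold iterate of the shift automorphism carries the Schubert variety $\Omega_w$ back to $\Omega_{\gamma^m(w)}$, and then read this off on cohomology. Recall that $\gamma=\sh^*$, and that $\sh\colon\sFl\to\sFl$ is an automorphism equivariant for the automorphism $z_i\mapsto z_{i-1}$ of $T$; hence $\sh^m$ (interpreting $\sh^{-1}$ for negative exponents) is again an equivariant automorphism, with $(\sh^m)^*=\gamma^m$. By functoriality of equivariant fundamental classes under equivariant isomorphisms,
\[
  \gamma^m(\sS_w) = (\sh^m)^*[\Omega_w] = \bigl[(\sh^m)^{-1}(\Omega_w)\bigr]
\]
in $H_T^*\sFl=\Lambda[x;y]$. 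So it suffices to prove $(\sh^m)^{-1}(\Omega_w)=\Omega_{\gamma^m(w)}$ as $T$-stable subvarieties; since $\gamma^m(w)$ is the conjugate of $w$ by the translation $i\mapsto i+m$, it lies in $\Sgp_\Z$ with $\ell(\gamma^m(w))=\ell(w)$, and it is enough to compare the two as point sets.

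For the comparison, note $\sh^m(e_i)=e_{i-m}$, so $\sh^m(V_{>q+m})=V_{>q}$ for every $q$. The shifted flag $\sh^m(E_\bullet)$ has $p$-th component $\sh^m(E_{p+m})$, and since $\sh^m$ is a linear isomorphism,
\begin{align*}
  \dim\bigl(\sh^m(E_{p+m})\cap V_{>q}\bigr) &= \dim\bigl(\sh^m(E_{p+m})\cap\sh^m(V_{>q+m})\bigr) \\
  &= \dim\bigl(E_{p+m}\cap V_{>q+m}\bigr).
\end{align*}
Hence $\sh^m(E_\bullet)\in\Omega_w$ if and only if $\dim(E_{p+m}\cap V_{>q+m})\ge k_w(p,q)$ for all $p,q$, equivalently $\dim(E_p\cap V_{>q})\ge k_w(p-m,q-m)$ for all $p,q$; that is, $(\sh^m)^{-1}(\Omega_w)$ is the locus defined by the rank conditions $\dim(E_p\cap V_{>q})\ge k_w(p-m,q-m)$.

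It remains to match these with the conditions defining $\Omega_{\gamma^m(w)}$, i.e.\ to check $k_{\gamma^m(w)}(p,q)=k_w(p-m,q-m)$. This is immediate from the definitions: since $\gamma^m(w)(a)=m+w(a-m)$,
\begin{align*}
  k_{\gamma^m(w)}(p,q) &= \#\{a\le p\mid m+w(a-m)>q\} \\
  &= \#\{b\le p-m\mid w(b)>q-m\} = k_w(p-m,q-m).
\end{align*}
This proves $(\sh^m)^{-1}(\Omega_w)=\Omega_{\gamma^m(w)}$, and hence the proposition. The only step that requires a little care is the first: one must confirm that the equivariance of $\sh$ with respect to $z_i\mapsto z_{i-1}$ is precisely what makes $(\sh^m)^*$ act on $H_T^*(\mathrm{pt})=\Z[y]$ by $y_i\mapsto y_{i+m}$, in accordance with the recorded formulas for $\gamma^m$, so that $(\sh^m)^*[\Omega_w]$ and $[\Omega_{\gamma^m(w)}]$ are genuinely the same element of the single ring $\Lambda[x;y]$. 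Everything else is formal, or may alternatively be verified at each finite level $\Fl(N-n,\ldots,N+n;V_{(-N,N]})$ and passed to the limit.
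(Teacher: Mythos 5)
Your argument is correct and is essentially the paper's own proof: the paper likewise verifies $k_{\gamma(w)}(p+1,q+1)=k_w(p,q)$ and uses $\sh^*\tS_p=\tS_{p+1}$, $\sh^*V_{>q}=V_{>q+1}$ to conclude $\sh^{-1}\Omega_w=\Omega_{\gamma(w)}$ and hence $\sh^*[\Omega_w]=[\Omega_{\gamma(w)}]$. You carry out the same computation directly for general $m$ and spell out the equivariance bookkeeping, but the route is the same.
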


\begin{proof}
The diagram of $\gamma(w)$ is obtained from that of $w$ by shifting one unit in the southeast direction; in particular, $k_{\gamma(w)}(p+1,q+1) = k_w(p,q)$.  Since $\sh^*(\tS_p)=\tS_{p+1}$ and $\sh^*V_{>q}=V_{>q+1}$, it follows that $\sh^{-1}\Omega_w = \Omega_{\gamma(w)}$ and therefore $\sh^*[\Omega_w]=[\Omega_{\gamma(w)}]$.
\end{proof}

\section{Direct sum morphism and coproduct}\label{s.dirsum}

We will define and study a direct sum morphism
\[
\boxplus \colon \sGr^k(V) \times \sGr^l(V) \to \sGr^{k+l}(\bV),
\]
as well as a similar one for flag varieties, giving an algebraic version of an $H$-space structure on $\sGr$.  We pay special attention to the action of these morphisms on Schubert classes.

Here $V$ is our usual vector space, with basis $e_i$ for $i\in \Z$, and $\bV = V\oplus V$.  Some care is required in the specification of base flags for $\sGr$ and $\sFl$.  We fix an ordered basis for $\bV = V\oplus V$ by vectors $\bbe_i$, for $i\in \frac{1}{2}\Z$.  These are
\[
  \bbe_i = \begin{cases} (e_i,0) & \text{for }i\in\Z; \\ (0,e_{i+\frac{1}{2}}) &\text{for }i\in \Z+\frac{1}{2}. \end{cases}
\]
So $\bbe_{-\frac{1}{2}} = (0,e_0)$, $\bbe_0 = (e_0,0)$, $\bbe_{\frac{1}{2}} = (0,e_1)$, etc.  The torus $T$ acts diagonally on $\bV$, so both $\bbe_i$ and $\bbe_{i-\frac{1}{2}}$ are scaled by the character $y_i$.

Standard subspaces, indexed by subsets of $\frac{1}{2}\Z$, are defined in the evident way.  In particular, we have a standard flag $\bV_{\leq \bullet}$.  Furthermore, $\bV_{(m,m]} = V_{(m,m]}\oplus V_{(m,m]}$ and $\bV_{\leq k} = V_{\leq k} \oplus V_{\leq k}$, when $m$ and $k$ are integers.

\subsection{Grassmannians}

We will describe the setup and state some results for the Grassmannian first, and prove the more general analogues for the flag variety in the following subsection.

As before, there is an isomorphism $H_T^*\sGr(\bV) = \Lambda[y]$.  Here we use the notation $\Lambda = \Z[\bbc]=\Z[\bbc_1,\bbc_2,\ldots]$, and the map identifies $\bbc_k = c^T(\bV_{\leq 0} - \tbS_0)$, where $\tbS_0$ is the tautological bundle on $\sGr(\bV)$.  Similarly, one has $H_T^*\sFl(\bV) = \Lambda[x;y]$, with $x_i = -c_1^T( \tbS_i/\tbS_{i-1} )$.

The direct sum morphism
\[
  \boxplus\colon \sGr^k(V;V_{\leq\bullet}) \times \sGr^l(V;V_{\leq\bullet}) \to \sGr^{k+l}(\bV;\bV_{\leq\bullet})
\]
given by $\boxplus( E,F ) = E\oplus F$, is readily checked to be well-defined and $T$-equivariant.

\begin{proposition}\label{p.embisom}
The morphism
\[
  f\colon \sGr(V) \to \sGr(\bV), \qquad  E \mapsto V_{\leq 0} \oplus  E,
\]
induces the standard isomorphism $\Lambda[y] \to \Lambda[y]$ on cohomology rings, sending $\bbc_k \mapsto c_k$.
\end{proposition}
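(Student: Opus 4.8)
\textit{Proof proposal.} The plan is to reduce the statement to a Chern-class computation on the finite-dimensional Grassmannians whose unions define $\sGr(V)$ and $\sGr(\bV)$, and then pass to the limit. First I would make the morphism explicit on finite pieces. Writing $\sGr(V) = \bigcup_m \Gr(m,V_{(-m,m]})$ as before, note that $\bV_{(-m,m]} = V_{(-m,m]}\oplus V_{(-m,m]}$ has dimension $4m$, that $\bV_{(-m,0]}$ has dimension $2m$, and that $\sGr(\bV) = \bigcup_m \Gr(2m,\bV_{(-m,m]})$, the point of $\sGr(\bV)$ coming from $W\in\Gr(2m,\bV_{(-m,m]})$ being $\bV_{\leq -m}\oplus W$. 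Set $U_m = \Span\{\bbe_j \mid j\in\Z+\tfrac12,\ -m<j\leq-\tfrac12\}$, an $m$-dimensional $T$-submodule of $\bV_{(-m,m]}$ (the half-integer-indexed part of $\bV_{(-m,0]}$). A direct check shows that $E\mapsto E\oplus V_{\leq 0}$ sends the point of $\sGr(V)$ coming from $E\in\Gr(m,V_{(-m,m]})$ to the point of $\sGr(\bV)$ coming from $E\oplus U_m\in\Gr(2m,\bV_{(-m,m]})$, and that the resulting maps
\[
 \phi_m\colon \Gr(m,V_{(-m,m]}) \to \Gr(2m,\bV_{(-m,m]}), \qquad E\mapsto E\oplus U_m,
\]
are compatible with the standard embeddings defining the two unions (using $U_{m+1} = \C\,\bbe_{-m-\frac12}\oplus U_m$ together with $V_{\leq -m} = \C\,e_{-m}\oplus V_{\leq -m-1}$).

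Next comes the Chern-class computation. On $\Gr(2m,\bV_{(-m,m]})$ the generator $\bbc_k$ restricts to $c_k^T(\bV_{(-m,0]} - \tbS_0)$, where now $\tbS_0$ denotes the tautological rank-$2m$ subbundle. Since $\phi_m^*\tbS_0 = \tS_0\oplus\underline{U}_m$ --- the tautological bundle of $\Gr(m,V_{(-m,m]})$ plus the constant bundle with fiber $U_m$ carrying its given $T$-action --- and since $\bV_{(-m,0]}$ splits $T$-equivariantly as $V_{(-m,0]}\oplus U_m$ (the integer-indexed part having weights $y_{-m+1},\dots,y_0$, exactly as for $V_{(-m,0]}$, and the half-integer-indexed part being $U_m$, because $T$ acts diagonally on $\bV$), we obtain
\[
 \phi_m^*\bbc_k = c_k^T\bigl(\bV_{(-m,0]} - \tS_0 - U_m\bigr) = c_k^T\bigl(V_{(-m,0]} - \tS_0\bigr) = c_k
\]
as classes on $\Gr(m,V_{(-m,m]})$, and the $y$-variables are visibly fixed.

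Finally, since the $\phi_m^*$ are compatible with the inverse systems of cohomology rings, passing to the graded inverse limit gives a ring homomorphism $H_T^*\sGr(\bV) = \Lambda[y] \to \Lambda[y] = H_T^*\sGr(V)$ sending $\bbc_k\mapsto c_k$ and $y_i\mapsto y_i$; as both rings are polynomial rings on these generators over $\Z[y]$, this is the asserted standard isomorphism. I expect the only real obstacle to be the bookkeeping of the half-integer indexing --- correctly pinning down $\bV_{(-m,0]}$, $U_m$, and the splitting of $\phi_m^*\tbS_0$ --- after which the argument is formal.
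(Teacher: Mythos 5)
Your proof is correct. The paper itself disposes of this proposition in one line, by deducing it from the next proposition: the map $E \mapsto E\oplus V_{\leq 0}$ factors through $\boxplus$ via the slice $\sGr(V)\times\{V_{\leq 0}\}$, and since $\boxplus^*\bbc = c\cdot c'$ with $c' = c^T(V_{\leq 0}-S_0')$, restricting the second factor to the base point sets $c'=1$ and leaves $\bbc_k\mapsto c_k$. You instead verify the statement from scratch on the finite-dimensional models, which is a genuinely more self-contained route: you correctly identify the finite approximations $\phi_m\colon \Gr(m,V_{(-m,m]})\to\Gr(2m,\bV_{(-m,m]})$, $E\mapsto E\oplus U_m$, check their compatibility with the structure embeddings of the two unions, and then the Chern-class identity $c_k^T(\bV_{(-m,0]}-\tS_0-U_m)=c_k^T(V_{(-m,0]}-\tS_0)$ is exactly the same Whitney-formula cancellation that drives the paper's argument, just performed before rather than after passing to the limit. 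What your version buys is independence from the coproduct proposition and an explicit handling of the half-integer bookkeeping; what the paper's version buys is brevity and the conceptual point that this map is the counit-like specialization of the coproduct. Both are complete.
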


\begin{proposition}
The homomorphism
\[
  H_T^*\sGr(\bV) \xrightarrow{\boxplus^*} H_T^*(\sGr(V)\times \sGr(V)),
\]
is identified with the homomorphism of $\Z[y]$-algebras
\[
 \Lambda[y] = \Z[\bbc,y] \xrightarrow{\Delta} \Lambda[y]\otimes_{\Z[y]} \Lambda[y] = \Z[c,c',y],
\]
given by $\bbc_k \mapsto c_k + c_{k-1} c'_1 + \cdots c_1 c'_{k-1} + c'_k$.  (Here $c=c^T(V_{\leq0}-S_0)$ comes from the first factor of $\sGr(V)$, and $c'=c^T(V_{\leq0}-S_0')$ comes from the second factor, so $\bbc=c\cdot c'$.)
\end{proposition}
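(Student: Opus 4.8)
The plan is to reduce to finite-dimensional Grassmannians and then quote multiplicativity of the total Chern class. First I would recall that $\boxplus$ is the direct limit of the finite-dimensional direct-sum maps
\[
  \Gr(m,V_{(-m,m]})\times\Gr(m,V_{(-m,m]}) \longrightarrow \Gr(2m,\bV_{(-m,m]}),\qquad (E,F)\mapsto E\oplus F,
\]
which are compatible with the embedding systems defining $\sGr(V)$ and $\sGr(\bV)$: the two basis vectors $\bbe_{-m},\bbe_{-m-\frac12}$ adjoined on the target side are exactly $(e_{-m},0)$ and $(0,e_{-m})$, one in each summand of $\bV=V\oplus V$, which matches adjoining $e_{-m}$ in each of the two source Grassmannians. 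Since $H_T^*$ of a rising union is the graded inverse limit of the finite-dimensional cohomology rings (the technical note on limits), and $H_T^*(\sGr(V)\times\sGr(V)) = \Lambda[y]\otimes_{\Z[y]}\Lambda[y]$, it suffices to verify the formula after restricting to each such approximation, compatibly in $m$.

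On a fixed approximation I would identify the pullbacks of the bundles defining $\bbc$. The fiber of $\boxplus^*\tbS_0$ over $(E,F)$ is $E\oplus F$, so $\boxplus^*\tbS_0 = S_0\oplus S_0'$ (the pullback of the tautological bundle from the first factor, plus that from the second); and $\bV_{\leq 0}$ is the fixed subspace $V_{(-m,0]}\oplus V_{(-m,0]}$, which pulls back to itself. Hence
\[
  \boxplus^*(\bV_{\leq 0} - \tbS_0) = (V_{(-m,0]} - S_0)\oplus(V_{(-m,0]} - S_0'),
\]
the first summand pulled back from the first factor and the second from the second. The Whitney sum formula then gives $\boxplus^*\bbc = c^T(V_{(-m,0]}-S_0)\cdot c^T(V_{(-m,0]}-S_0') = c\cdot c'$; as this identity is independent of $m$ and stable under the transition maps, it holds in the limit. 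Extracting the degree-$k$ component yields $\boxplus^*\bbc_k = \sum_{i+j=k}c_i c_j' = \Delta(\bbc_k)$ (with $c_0 = c_0' = 1$), and since pullback is a homomorphism of $\Z[y]$-algebras this identifies $\boxplus^*$ with $\Delta$.

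I do not expect a serious obstacle: once the finite-dimensional picture is set up correctly the statement is an immediate consequence of the Whitney sum formula. The only point requiring genuine care is the first step — checking that the finite-dimensional direct-sum maps are compatible with the chosen embedding systems for $\sGr(V)$ and $\sGr(\bV)$ (the $\tfrac12\Z$-index bookkeeping), and that $\boxplus^*$ on cohomology is indeed computed as the inverse limit of their pullbacks. As a consistency check, the preceding proposition (on $E\mapsto E\oplus V_{\leq 0}$) drops out as the special case obtained by composing with $\mathrm{id}\times\{V_{\leq 0}\}$, under which $c'$ restricts to $c^T(V_{\leq 0}-V_{\leq 0}) = 1$.
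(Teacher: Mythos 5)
Your proposal is correct and matches the paper's argument: the paper proves this proposition in one line by computing $\boxplus^*c^T(\bV_{\leq0}-\tbS_0) = c^T(V_{\leq0}+V_{\leq0}-\tS_0-\tS'_0) = c\cdot c'$ via the Whitney sum formula, exactly as you do. The extra care you take with the finite-dimensional approximations and the compatibility of $\boxplus$ with the embedding systems is left implicit in the paper but is checked correctly here.
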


The first of these propositions follows from the second, after replacing $c$ by $c'$, since $f(E) = \boxplus(E,V_{\leq 0})$.  And the second proposition is simply the equation $\boxplus^*c^T(\bV_{\leq0} - \tbS_0) = c^T(V_{\leq0}+V_{\leq0}-\tS_0-\tS'_0) = c^T(V_{\leq0}-S_0)\cdot c^T(V_{\leq0}-S'_0)$.  

Using the isomorphism $H_T^*\sGr(V)=H_T^*\sGr(\bV)$, the homomorphism $\boxplus^*=\Delta$ determines a commutative coproduct structure on $H_T^*\sGr(V)$.  This coproduct has been studied by many authors.  It is induced by the coproduct on $\Lambda$, and it is well known that this can be written in the Schur basis by
\[
 \Delta(s_\lambda(\bbc)) = \sum_{\mu,\nu} c_{\mu,\nu}^\lambda s_{\mu}(c) \otimes s_{\nu}(c'),
\]
where $c_{\mu,\nu}^\lambda$ is the Littlewood-Richardson coefficient.  So it can be computed from an expression in terms of the Schur basis.  (Using \eqref{e.kempf-laksov}, the Schur function $s_\lambda(c)$ is defined as the determinant
\begin{align*}
 s_\lambda(c) = s_\lambda(c|0) = \det( c_{\lambda_i-i+k} )_{1\leq i,j \leq s}
\end{align*}
for any partition $\lambda_1\geq \cdots \geq \lambda_s \geq 0$.)

We are more interested in the Schubert basis.  Schubert varieties in $\sGr(\bV)$ are defined with respect to a flag $\bV_\bull^-$, where for $q\in\Z$,
\begin{equation}\label{e.double-flag}
  \bV^-_q = V_{>0} \oplus V_{>q} .
\end{equation}
Then $\bbOmega_\lambda = \{ E\,|\, \dim(E \cap \bV^-_{\lambda_i-i})\geq i\text{ for all }i \}$.  Under the embedding $f\colon \sGr(V) \to \sGr(\bV)$, we have $f^{-1}\bbOmega_\lambda  = \Omega_\lambda$, so $f^*[\bbOmega_\lambda] = [\Omega_\lambda]$ and 
\[
 [\bbOmega_\lambda] = \enS_{w_\lambda}(\bbc;x;y) = s_\lambda(\bbc|y).
\]

Molev gives formulas for the structure constants here \cite{molev}.  In our geometric context, we have
\[
  \boxplus^*[\bbOmega_\lambda] = \sum_{\mu,\nu} \hat{c}_{\mu,\nu}^\lambda(y) \, [\Omega_\mu] \times [\Omega_\nu],
\]
for {\em dual Littlewood-Richardson polynomials} $\hat{c}_{\mu,\nu}^\lambda(y) \in \Z[y]$.  In terms of Schubert polynomials, this is equivalent to the Cauchy formula
\begin{align*}
  \enS_{w_\lambda}(\bbc;x;y) &= \sum_{uv \dot{=} w_\lambda}  F_{u}(c;y) \cdot \enS_{v}(c';x;y) \\
  &= \sum_{\mu,\nu \subset \lambda} \hat{c}_{\mu,\nu}^\lambda(y)\,\enS_{w_\mu}(c;x;y)\cdot \enS_{w_{\nu}}(c';x;y).
\end{align*}
(See \cite[\S5]{part1} and \cite[\S4.8]{LLS}.\footnote{In the notation of \cite{LLS},  evaluating $y=-a$ and $c=\prod_{i\leq0}\frac{1-a_i}{1-x_i}$ sends $\enS_{w_\lambda}(c;x;y)$ to $s_\lambda(x|\!|a)$.  In particular our $\hat{c}^\lambda_{\mu,\nu}(y)$ is their $\hat{c}^\lambda_{\mu,\nu}(-a)$.  The translation to Molev's notation is explained in \cite[\S A.4]{LLS}.})  That is, for $u=w_\lambda w_\nu^{-1}$, the Stanley function expands as $F_u(c;y) =\sum_{\mu} \hat{c}_{\mu,\nu}^\lambda(y)\,\enS_{w_\mu}(c;x;y)$.   The polynomial $\enS_{w_\lambda}(c;x;y)=s_\lambda(c|y)$ is always independent of $x$, since it represents a class coming from $H_T^*\sGr = \Lambda[y]$.

The coefficients $\hat{c}_{\mu,\nu}^\lambda(y)$ are {\em Graham-positive}; this is a special case of \cite[Theorem~4.22]{LLS}.  We will give a proof which covers the general case below.

\begin{proposition}\label{p.graham1}
Each $\hat{c}_{\mu,\nu}^\lambda(y)$ is a nonnegative combination of terms which are products of linear factors $y_i-y_j$, for $i\succ j$, ordered so that the nonpositive indices are all greater than the positive ones.  (That is, $1\prec 2 \prec \cdots \prec -2 \prec -1 \prec 0$.)
\end{proposition}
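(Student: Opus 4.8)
The plan is to realize each $\hat{c}_{\mu,\nu}^\lambda(y)$ as a structure constant attached to the direct sum morphism and then invoke Graham's positivity theorem. Since $\hat{c}_{\mu,\nu}^\lambda(y)$ is a fixed polynomial and, for fixed $\lambda$, only the finitely many $(\mu,\nu)$ with $\mu,\nu\subseteq\lambda$ contribute, it suffices to argue in a large finite-dimensional model: for $m\gg0$ the direct sum restricts to a $T$-equivariant closed embedding
\[
 \boxplus_m\colon \Gr(m,V_{(-m,m]})\times \Gr(m,V_{(-m,m]})\hookrightarrow \Gr(2m,\bV_{(-m,m]}),
\]
compatibly with the approximating systems of \S\ref{s.sato}. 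Let $B\subset GL(V_{(-m,m]})$ be the Borel subgroup stabilizing the restricted flag $V_{>\bullet}$ used to define the $\Omega_\mu$, with unipotent radical $U$, so that on $\Gr(m,V_{(-m,m]})$ the $B$-orbits are the Schubert cells $\Omega_\mu^\circ$ and $\overline{\Omega_\mu^\circ}=\Omega_\mu$. Since $B$ preserves every $V_{>q}$, the block-diagonal subgroup $B\times B\subset GL(\bV_{(-m,m]})$ preserves every $\bV_{>r}$ with $r\in\tfrac12\Z$ (for integer $q$ one has $\bV_{>q}=V_{>q}\oplus V_{>q}$ and $\bV_{>q-\frac12}=V_{>q-1}\oplus V_{>q}$), hence $B\times B$ lies in the Borel $\mathcal{B}$ of $GL(\bV_{(-m,m]})$ stabilizing $\bV_{>\bullet}$.

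Therefore $\boxplus_m(\Omega_\mu\times\Omega_\nu)$ is a $B\times B$-invariant subvariety of $\Gr(2m,\bV_{(-m,m]})$, the $\mathcal B$-Schubert cells $\Omega_\lambda^\circ$ give a $B\times B$-stable affine paving with closed strata $\Omega_\lambda$, and Graham's theorem says that the expansion
\[
 (\boxplus_m)_*\big([\Omega_\mu]\times[\Omega_\nu]\big)=[\boxplus_m(\Omega_\mu\times\Omega_\nu)]=\sum_\lambda d_{\mu,\nu}^\lambda\,[\Omega_\lambda]
\]
in $H_T^*\Gr(2m,\bV_{(-m,m]})$ has each $d_{\mu,\nu}^\lambda$ a nonnegative integer combination of monomials in $\{-\chi\}$, where $\chi$ runs over the $T$-weights of $U$ (occurring in either factor). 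One then identifies $d_{\mu,\nu}^\lambda$ with $\hat{c}_{\mu,\nu}^\lambda(y)$ by a projection-formula argument: Poincaré duality in $H_T^*\sGr(\bV)$ pairs $[\Omega_\lambda]$ with an opposite Schubert class, the direct sum morphism realizes the coproduct $\boxplus^*=\Delta$, and so the matrix of $\Delta$ in the Schubert bases is a transpose of that of $\boxplus_*$ (up to the $\omega$-involution, which preserves the relevant cone). Finally one computes the set $\{-\chi\}$: reading the basis $e_i$ of $V_{(-m,m]}$ in the order in which these vectors occur along $V_{>\bullet}$, the weights of $U$ come out so that $\{-\chi\}=\{y_i-y_j : i\succ j\}$ for the ordering $1\prec 2\prec\cdots\prec-2\prec-1\prec0$; this is consistent with the fixed-point restriction \eqref{e.interpolate1}. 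Letting $m\to\infty$ (each $\hat{c}_{\mu,\nu}^\lambda(y)$ stabilizes, and the allowed family of linear forms only grows) proves the proposition.

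Two points require genuine care. First, $\boxplus$ is \emph{not} generically transverse to Schubert varieties -- for instance $\boxplus^{-1}(\Omega_{(1,1)})=(\Omega_{(1)}\times\sGr)\cup(\sGr\times\Omega_{(1)})$ has codimension $1$, not $2$ -- so the argument must go through the pushforward $\boxplus_*$ together with Poincaré duality, rather than pulling back $[\Omega_\lambda]$ and reading off $[\boxplus^{-1}(\Omega_\lambda)]$. Second, the sign and ordering bookkeeping must be carried out so that the root system of $B$ produces exactly the ordering $1\prec 2\prec\cdots\prec-2\prec-1\prec0$ and not its reverse; this identification of the positivity cone is the only step that is not a direct transcription of the finite-dimensional theory of Schubert varieties and of Graham's theorem, and it is where I expect the main difficulty to lie. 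The same scheme, run on $\sFl$ and -- in type C -- with the direct sum morphisms $\Sigma$ and $\tau$, will give Theorems~\ref{t.positive}, \ref{t.positive2}, and \ref{t.positiveC}.
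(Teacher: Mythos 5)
Your overall strategy---reduce to a large finite-dimensional model, realize the coefficients via the pushforward along $\boxplus$ of a product of Schubert classes, and apply Graham's theorem---is the same as the paper's (which proves the more general Theorem~\ref{t.positive} and obtains the proposition as the special case $w=w_\lambda$, $v=w_\nu$).  But the step you yourself flag as the main difficulty is exactly where the argument breaks.  Graham's theorem produces coefficients in the characters of $T$ on the quotient $U_N/U_0$ of unipotent radicals (not, as you write, on ``the weights of $U$ occurring in either factor,'' which is $U_0$ itself).  If you take $B_N=\mathcal{B}$ to be the stabilizer of the standard interleaved flag $\bV_{>\bullet}$ and $B_0=B\times B$ block-diagonally, the mixed roots on $\mathcal{U}/(U\times U)$ come out as $y_a-y_b$ with $a>b$ in the \emph{standard} order of $\Z$ (each with multiplicity $2$, plus some zero weights), because the half-integer interleaving places $(0,e_j)$ immediately next to $(e_j,0)$.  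No ordering of the basis of $V$ alone, read along $V_{>\bullet}$ or otherwise, produces the order $1\prec2\prec\cdots\prec-2\prec-1\prec0$: on the mixed-sign pairs that order is \emph{opposite} to the standard one, and the resulting cones are genuinely different.  Your argument would therefore prove, e.g., that $\hat{c}^{(3,1)}_{(1),(2)}$ lies in $\Z_{\geq0}[y_a-y_b\,:\,a>b]$, whereas it equals $y_0-y_1=-(y_1-y_0)$.  The $\omega$-involution cannot rescue this, since (as you can check) it preserves both the standard cone and the claimed one, so passing between the $\Omega$-basis and $X$-basis structure constants never converts one cone into the other.

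The missing idea is the paper's choice of a \emph{non-obvious} ordered basis of $\bV=V\oplus V$: first the nonpositive part of copy $1$, then the nonpositive part of copy $2$, then the positive part of copy $2$, then the positive part of copy $1$.  The corresponding upper-triangular group $\mathds{B}^+$ still contains $B^+\times B^+$ and still has the Schubert varieties $\mathds{X}_w$ (for $w\in\Sgp_{(-m,m]}$) as its invariant cycles, but now the characters on $\mathds{U}^+/(U^+\times U^+)$ for mixed-sign pairs are $y_i-y_j$ with $i\leq0$ and $j>0$, i.e.\ with the correct orientation $i\succ j$.  (The paper also pushes forward the $B^+$-invariant classes $[X_\mu\times X_\nu]$ and pairs against $[\mathbb{\Omega}_w]$, so the coefficients are identified with $\hat{c}^w_{\mu,\nu}(y)$ directly by Poincar\'e duality, avoiding your transpose-plus-$\omega$ detour.)  Finally, be aware that even with the correct Borel, Graham's theorem alone yields both orientations of the same-sign forms $y_i-y_j$; discarding the wrongly oriented ones ($i<j$ of the same sign) requires the additional input of \cite[Theorem~4.22]{LLS}, which the paper invokes in its last sentence.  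So the proposition is not purely a transcription of Graham's theorem even after the Borel is fixed.
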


\begin{example}
The nonzero coefficients for $\lambda=(3,1)$ are:
\begin{eqnarray*}
  &\hat{c}^{(3,1)}_{\emptyset,(3,1)} = \hat{c}^{(3,1)}_{(1),(2,1)} =  \hat{c}^{(3,1)}_{(1),(3)} = \hat{c}^{(3,1)}_{(2),(1,1)} = \hat{c}^{(3,1)}_{(2),(2)} = 1,  \\
 &\hat{c}^{(3,1)}_{(1),(2)} = y_0-y_1, \\
& \hat{c}^{(3,1)}_{(1),(1,1)} = y_2-y_1, \\
& \hat{c}^{(3,1)}_{(1),(1)} = (y_2-y_1)(y_0-y_1).
\end{eqnarray*}
One can have repeated factors, e.g., $\hat{c}^{(2,2,1)}_{(1),(1,1)} = (y_0-y_1)^2$.  In fact, we will see that only linear forms and squares of linear forms occur as factors (Theorem~\ref{t.positive}).
\end{example}

As usual, the morphism $\boxplus$ comes from compatible morphisms of finite-dimensional varieties, $\boxplus\colon \Gr(m,V_{(-m,m]}) \times \Gr(m,V_{(-m,m]}) \to \Gr(2m,\bV_{(-m,m]})$.  The subvariety $\boxplus( X_\mu \times X_\nu ) \subseteq \Gr(2m,\bV_{(-m,m]})$ is a Richardson variety, $X_{\mu \oslash_m \nu} \cap \Omega_{\rho_m}$, where $\mu \oslash_m \nu$ is the partition $(\nu_1+m,\ldots,\nu_m+m,\mu_1,\ldots,\mu_m)$, and $\rho_m$ is the $m\times m$ rectangle.  (In Young diagrams, one forms $\mu \oslash_m \nu$ by placing $\nu$ to the right of the $m\times m$ rectangle, and placing $\mu$ below the rectangle; we are assuming $m$ is at least equal to the number of parts of $\nu$ and to the largest part of $\mu$.  This is \cite[Proposition~2.1]{thomas-yong}.)  The coefficients $\hat{c}_{\mu,\nu}^\lambda$ arise in the expansion of the class of this Richardson variety in a Schubert basis with respect to a third $T$-invariant flag: the one corresponding to the ordered basis
\begin{align*}
&(e_{-m+1},0), \ldots, (e_0,0), (0,e_{-m+1}), \ldots, (0,e_0), \\
& \qquad \qquad \qquad   (0,e_1),\ldots,(0,e_m), (e_1,0), \ldots, (e_m,0).
\end{align*}

This interpretation leads to another way of computing.  Fix a sufficiently large $m$, consider variable sets $x=(x_{-2m+1},\ldots,x_{2m})$ and $t=(t_{-2m+1},\ldots,t_{2m})$, and let $s_\lambda(c|t)$ be the specialization of $\enS_{w_\lambda}(c;x;t)$ by $c = \prod_{i=-2m+1}^{0} \frac{1+t_i}{1-x_i}$.  Then $\hat{c}_{\mu,\nu}^\lambda(y)$ is the coefficient of $s_{\mu \oslash_m \nu}(c|{\mathbb{y}})$ in the expansion of $s_\lambda(c|\tilde{\mathbb{y}} ) \cdot s_{\rho_m}(c|{\mathbb{y}})$, where
\begin{align}
\mathbb{y} &= (y_{-m+1},\ldots,y_m,y_{-m+1},\ldots,y_m) \label{e.y1} \intertext{and} 
\tilde{\mathbb{y}} &= (y_{-m+1},\ldots,y_0,y_{-m+1},\ldots,y_0,y_1,\ldots,y_m,y_1,\ldots,y_m). \label{e.y2}
\end{align}
For example, $\hat{c}^{(2,2,1)}_{(1),(1,1)}(y) = (y_0-y_1)^2$ is the coefficient of $s_{(3,3,1)}(c|{\mathbb{y}})$ in the product
\[
  s_{(2,2,1)}(c| y_{-1},y_0,y_{-1},y_0,y_1,y_2,y_1,y_2) \cdot s_{(2,2)}(c|y_{-1},y_0,y_1,y_2,y_{-1},y_0,y_1,y_2).
\]
(In comparison with \cite{LLS}, our $s_\lambda(c|t)$ is their $s_\lambda(x|\!|{-a})$.)

\subsection{Flag varieties}

The direct sum morphism extends to an action on the flag variety: one defines
\[
  \boxplus\colon  \sGr(V)\times \sFl(V) \to \sFl(\bV)
\]
in the same way, so that $(F,E_\bullet)$ is sent to the flag $\mathds{E}_\bullet$ with $\mathds{E}_k = F\oplus E_k$.  The pullback $\boxplus^*\colon H_T^*\sFl \to H_T^*(\sGr\times\sFl)$ is identified with a co-module operation $\Delta\colon \Lambda[x;y] \to \Lambda[y] \otimes_{\Z[y]} \Lambda[x;y]$.  As before, this homomorphism is determined by its values on Schur polynomials, and one can compute using classical Littlewood-Richardson numbers; but also as before, we are more interested in the behavior of Schubert polynomials.

The morphism $\boxplus$ induces an embedding $f\colon \sFl(V) \hookrightarrow \sFl(\bV)$, by $E_\bullet\mapsto V_{\leq0}\oplus E_\bullet$, and as in Proposition~\ref{p.embisom}, the pullback is an isomorphism on cohomology rings, $\Lambda[x,y] \mapsto \Lambda[x,y]$, sending $\bbc\to c$.

Schubert varieties in $\sFl(\bV)$ are again defined with respect to the flag $\bV^-_\bullet$ described in \eqref{e.double-flag}, so
\[
  \bbOmega_w = \big\{ E_\bullet \,|\, \dim(E_p \cap \bV^-_q) \geq k_w(p,q) \text{ for all }p,q \big\}.
\]
As before, $f^{-1}\bbOmega_w = \Omega_w$, and we have $[\bbOmega_w] = \enS_w(\bbc;x;y)$ in $H_T^*\sFl(\bV)$.

The action on Schubert classes is by
\[
  \boxplus^*[\bbOmega_w] = \sum_{\mu,v} \hat{c}^w_{\mu,v}(y) [\Omega_\mu] \times [\Omega_v].
\]
Using $\bbc=c\cdot c'$, this is expressed via the Cauchy formula as
\begin{align*}
 \enS_w(\bbc;x;y) &= \sum_{uv \dot{=} w}  F_{u}(c;y) \cdot \enS_{v}(c';x;y) \\
  &= \sum_{\mu , v} \hat{c}_{\mu,v}^w(y)\,\enS_{w_\mu}(c;x;y)\cdot \enS_{v}(c';x;y).
\end{align*}
Comparing coefficients of $\enS_v$, it follows that $\hat{c}_{\mu,v}^w(y)=0$ unless $\ell(wv^{-1})=\ell(w)-\ell(v)$.  When this length-additivity condition holds, the coefficients arise in the expansion
\[
F_{wv^{-1}}(c;y) =\sum_{\mu} \hat{c}_{\mu,v}^w(y)\,\enS_{w_\mu}(c;x;y).
\]
In the terminology of \cite[\S4]{LLS}, these are the \define{double Edelman-Greene coefficients}, the precise translation being
\[
  \hat{c}^w_{\mu,v}(y) = \hat{c}_{\mu,e}^{wv^{-1}}(y) = j^{wv^{-1}}_\mu(-a)
\]
when $\ell(wv^{-1})=\ell(w)-\ell(v)$ (and $\hat{c}^w_{\mu,v}(y) = 0$ otherwise).

\begin{theorem}\label{t.positive}
The coefficient $\hat{c}^w_{\mu,v}(y)$ lies in $\Z_{\geq 0}[ y_i-y_j \,|\, i\succ j]$.  It is a nonnegative sum of terms which are squarefree in the linear forms $y_i-y_j$, if both indices have the same sign (positive or nonpositive), and have degree at most $2$ in the forms $y_i-y_j$, for $i$ nonpositive and $j$ positive.
\end{theorem}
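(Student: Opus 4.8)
The strategy is to read off the coefficients $\hat c^{w}_{\mu,v}(y)$ from the geometry of $\boxplus$, prove Graham-positivity from the invariance of the relevant locus under a solvable group, and extract the degree bounds from the fact that $\bV=V\oplus V$ is a double.

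First I would pass to finite-dimensional approximations, where $\boxplus$ becomes a morphism of partial flag varieties and $\hat c^{w}_{\mu,v}(y)$ is (stably in the truncation) the coefficient of $[\Omega_\mu]\times[\Omega_v]$ in the expansion of $\boxplus^{*}[\Omega_w]$ in the product Schubert basis. The key point is to exhibit $\boxplus^{*}[\Omega_w]$ as the class of an explicit invariant subvariety of $\sGr(V)\times\sFl(V)$: because $\bV_{>q}$ is, for each $q$, a direct sum of standard opposite subspaces of the two copies of $V$, pulling back the rank conditions defining $\Omega_w$ produces a degeneracy locus cut out by conditions that couple the two factors only through their intersections with the standard flags, hence a locus invariant under a solvable subgroup of the relevant automorphism group. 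Matching the $T$-weights that occur there against the combinatorics of the two copies of $V$ sitting inside $\bV$ — with the ``positive'' block of indices treated as the attracting part and the ``non-positive'' block as the repelling part — one identifies exactly the linear forms $y_i-y_j$ with $i\succ j$ for the ordering $1\prec 2\prec\cdots\prec-2\prec-1\prec0$, and Graham's positivity theorem gives $\hat c^{w}_{\mu,v}(y)\in\Z_{\ge0}[\,y_i-y_j\mid i\succ j\,]$. Getting this ordering right, rather than some naive order on $\Z$, is the first delicate point; passing through the Thomas--Yong description of $\boxplus(X_\mu\times X_v)$ as a Richardson variety and tracking the third flag is one way to do it.

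For the degree refinement I would resolve the degeneracy locus — equivalently each piece $\boxplus(X_\mu\times X_v)$, which generalizes the Richardson-variety picture of \cite{thomas-yong} from Grassmannians to flags — by a fibered product of two Bott--Samelson-type towers, one resolving the conditions inherited from the $\sGr(V)$-factor and one from the $\sFl(V)$-factor. Pushing the fundamental class forward writes $\hat c^{w}_{\mu,v}(y)$ as a sum of monomials in first Chern classes of line bundles along the two towers, each such class a single root $y_i-y_j$, so the degree of $\hat c^{w}_{\mu,v}$ in a fixed root is bounded by the number of tower steps carrying it. A step carrying a same-sign root $y_i-y_j$ lies entirely inside one ``half'' of the configuration and occurs at most once, giving square-freeness in those variables; a step carrying a mixed root $y_i-y_j$ with $i\le0<j$ crosses between the halves, and such crossings are exactly doubled in passing from $V$ to $\bV=V\oplus V$, capping the multiplicity at two. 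The main obstacle is making this count precise — tracking which roots can recur along the two towers and confirming that the doubling $\bV=V\oplus V$ is the sole source of repetition among mixed roots and that it forbids any repetition among same-sign roots. A parallel, perhaps cleaner, route uses the Cauchy formula to reduce to the expansion of the double Stanley polynomial $F_{wv^{-1}}(c;y)$ in the Schur basis $\{s_\mu(c\|{-y})\}$ and then proves the bound by induction on $\ell(w)$ via divided-difference operators, checking that each operator raises the degree in a mixed root by at most one and leaves the degree in same-sign roots unchanged; there the obstacle is controlling which new roots successive operators can introduce.
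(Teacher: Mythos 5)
Your first paragraph is, in substance, the paper's argument for the bare positivity statement, though the paper runs it in the dual (pushforward) direction: by Poincar\'e duality, $\hat{c}^w_{\mu,v}(y)$ is the coefficient of $[\mathds{X}_w]$ in $\boxplus_*[X_\mu\times X_v]$, and $\boxplus(X_\mu\times X_v)$ is a $(B^+\times B^+)$-invariant subvariety of a finite-dimensional $\Fl(\bV)$, so Graham's theorem applies directly with $B_0=B^+\times B^+$ inside $\mathds{B}^+$, whose only invariant cycles are the $\mathds{X}_w$.  Your pullback formulation needs more justification than you give: $\boxplus^{-1}(\Omega_w)$ is not visibly invariant under a solvable group whose invariant cycles on $\sGr(V)\times\sFl(V)$ are exactly the products $\Omega_\mu\times\Omega_v$, and one must also argue that $[\boxplus^{-1}\Omega_w]=\boxplus^*[\Omega_w]$.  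Moreover, your assertion that the weights occurring are ``exactly'' the $y_i-y_j$ with $i\succ j$ is not automatic: with the paper's ordered basis of $\bV$, the quotient $\mathds{U}^+/(U^+\times U^+)$ also carries same-sign weights in the wrong order (both $y_{-1}-y_0$ and $y_0-y_{-1}$ occur), and the paper must invoke \cite[Theorem~4.22]{LLS} to discard those; your argument needs the same input or a substitute for it.

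The genuine gap is in the degree refinement, which is the new content of the theorem.  No Bott--Samelson towers or divided differences are needed: Graham's theorem already gives the bound, because it writes the class as $\sum_{I\subseteq\{1,\ldots,N\}}\bigl(\prod_{i\in I}\chi_i\bigr)[Z_I]$ with $I$ ranging over \emph{subsets} of the index set of the characters $\chi_1,\ldots,\chi_N$ of $T$ on $U_N/U_0$, counted with multiplicity.  Hence the degree of $\hat{c}^w_{\mu,v}(y)$ in each linear form is at most the multiplicity of that form in $\mathds{U}^+/(U^+\times U^+)$, and a direct inspection of the chosen basis (Figure~\ref{f.borel}) shows this multiplicity is $2$ for $y_i-y_j$ with $i\leq 0<j$ and $1$ for same-sign pairs (once the wrong-order same-sign characters are eliminated as above).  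Your two proposed substitutes are both left unexecuted by your own admission (``the main obstacle is making this count precise''), and the divided-difference route is particularly doubtful: successive operators can reintroduce roots, and there is no evident reason the degree in same-sign forms is preserved at each step.  As written, the refinement is not established.
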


The total order $\prec$ on $\Z$ is the one defined in Proposition~\ref{p.graham1}, so $1\prec 2 \prec \cdots \prec -1 \prec 0$.  
The theorem refines \cite[Theorem~4.22]{LLS}, which asserts positivity without bounds on the powers of $y_i-y_j$.  The proof given in \cite{LLS} relates the coefficient $\hat{c}_{\mu,v}^w(y)$ to one appearing in the equivariant homology of the affine Grassmannian, and then invokes the quantum-affine (Peterson) isomorphism and positivity in equivariant quantum cohomology.

Our argument is based on a direct application of Graham's positivity theorem \cite{graham}, which says the following.  Suppose $B_N$ is a connected solvable group, with unipotent radical $U_N$ and maximal torus $T$, and $B_0\subset B_N$ is a closed subgroup whose unipotent radical $U_0\subset U_N$ is normalized by $T$.  Let $\chi_1,\ldots,\chi_N$ be the characters of $T$ on the quotient variety $U_N/U_0$ (considered as an affine space), counted with multiplicity.  If $B_N$ acts on a variety $X$, and $Y \subseteq X$ is a $B_0$-invariant subvariety, then there are $B_N$-invariant cycles $Z_I$ so that
\[
  [Y] = \sum_{I\subseteq \{1,\ldots,N\}} \left(\prod_{i\in I} \chi_i \right) [Z_I]
\]
as $T$-equivariant Chow (or homology) classes.  (See also \cite[Ch.~19]{ecag}.)

\begin{proof}
We may compute a given coefficient $c^w_{\mu,v}$ on a sufficiently large but finite dimensional flag variety, so for now we choose $m\gg0$ and set $V=V_{(-m,m]}$, etc., writing $\Fl(V)$ for the complete flag variety, and $\Fl(\bV) = \Fl(m,m+1,\ldots,3m;\bV)$ for the partial flag variety, so the direct sum map is $\boxplus\colon \Gr(m,V)\times \Fl(V)  \to \Fl(\bV)$.  We use the ordered basis $e_{-m+1}, \ldots, e_m$ for $V$, as usual, and let $B^+\subseteq GL(V)$ be the subgroup stabilizing the corresponding flag $V_{\leq \bullet}$.  For $\bV= V\oplus V$, we use the ordered basis
\begin{align*}
&(e_{-m+1},0), \ldots, (e_0,0), (0,e_{-m+1}), \ldots, (0,e_0), \\
& \qquad \qquad \qquad   (0,e_1),\ldots,(0,e_m), (e_1,0), \ldots, (e_m,0).
\end{align*}
The flag $\bV^-_\bullet$ obtained by reading this basis backwards is the one used to define the (opposite) Schubert variety $\bbOmega_w$.  Let $\mathds{B}^-\subseteq GL(\bV)$ be the subgroup stabilizing this flag, and let $\mathds{B}^+$ be the subgroup stabilizing the flag $\bV^+_\bullet$ obtained by reading the basis forwards.

So in our chosen bases for $V$ and $\bV$, the subgroups $B^+\subset GL(V)$ and $\mathds{B}^+\subset GL(\bV)$ are upper-triangular, and $B^-$ and $\mathds{B}^-$ are lower-triangular.  Let $U^+\subset B^+$ and $\mathds{U}^+\subset \mathds{B}^+$ be the corresponding unipotent radicals.


In $\Fl(V)$, the $B^-$ invariant Schubert varieties $\Omega_v$ (of codimension $\ell(v)$) are transverse to $B^+$-invariant Schubert varieties $X_v$ (of dimension $\ell(v)$); likewise one has $\Omega_\mu$ and $X_\mu$ in $\Gr(m,V)$.  The $\mathds{B}^-$-invariant $\bbOmega_w$ and $\mathds{B}^+$-invariant $\mathds{X}_w$ in $\Fl(\bV)$ are defined with respect to the flags $\bV^-_\bullet$ and $\bV^+_\bullet$, respectively.  As we have seen, $\bbOmega_w$ has class $\enS_w(\bbc;x;y)$.

By Poincar\'e duality, we have
\[
  \boxplus_*( [X_\mu \times X_v ] ) = \sum_w \hat{c}^w_{\mu,v}(y) \cdot [\mathds{X}_w]
\]
in $H_T^*\Fl(\bV)$.  The left-hand side is the class of the $(B^+ \times B^+)$-invariant subvariety $\boxplus( X_\mu \times X_v ) \subseteq \Fl(\bV)$.  Applying Graham's theorem expresses this as a sum of $\mathds{B}^+$-invariant cycles, with coefficients coming from the characters of $T$ acting on $\mathds{U}^+/(U^+\times U^+)$.  Since the only $\mathds{B}^+$-invariant cycles are Schubert varieties $\mathds{X}_w$, this is the desired decomposition.

The characters on $\mathds{U}^+/(U^+\times U^+)$ are $y_i-y_j$ for $i\leq 0$ and $j>0$ (each with multiplicity $2$), and $y_i-y_j$ for $i,j\leq 0$ or $i,j>0$ (each with multiplicity $1$).  See Figure~\ref{f.borel} for an illustration.

Finally, by \cite[Theorem~4.22]{LLS}, if $i$ and $j$ have the same sign and $i<j$, the linear forms $y_i-y_j$ do not contribute.
\end{proof}

\begin{figure}
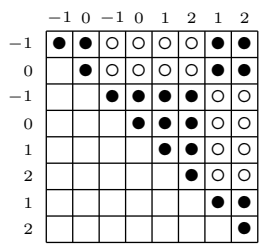

\begin{center}
\pspicture(0,0)(100,100)

\psline(10,10)(90,10)
\psline(10,20)(90,20)
\psline(10,30)(90,30)
\psline(10,40)(90,40)
\psline(10,50)(90,50)
\psline(10,60)(90,60)
\psline(10,70)(90,70)
\psline(10,80)(90,80)
\psline(10,90)(90,90)

\psline(10,10)(10,90)
\psline(20,10)(20,90)
\psline(30,10)(30,90)
\psline(40,10)(40,90)
\psline(50,10)(50,90)
\psline(60,10)(60,90)
\psline(70,10)(70,90)
\psline(80,10)(80,90)
\psline(90,10)(90,90)

\rput[r](5,15){\tiny$2$}
\rput[r](5,25){\tiny$1$}
\rput[r](5,35){\tiny$2$}
\rput[r](5,45){\tiny$1$}
\rput[r](5,55){\tiny$0$}
\rput[r](5,65){\tiny$-1$}
\rput[r](5,75){\tiny$0$}
\rput[r](5,85){\tiny$-1$}

\rput(15,95){\tiny$-1$}
\rput(25,95){\tiny$0$}
\rput(35,95){\tiny$-1$}
\rput(45,95){\tiny$0$}
\rput(55,95){\tiny$1$}
\rput(65,95){\tiny$2$}
\rput(75,95){\tiny$1$}
\rput(85,95){\tiny$2$}

\rput(15,85){$\bullet$}
\rput(25,85){$\bullet$}
\rput(35,85){$\circ$}
\rput(45,85){$\circ$}
\rput(55,85){$\circ$}
\rput(65,85){$\circ$}
\rput(75,85){$\bullet$}
\rput(85,85){$\bullet$}

\rput(25,75){$\bullet$}
\rput(35,75){$\circ$}
\rput(45,75){$\circ$}
\rput(55,75){$\circ$}
\rput(65,75){$\circ$}
\rput(75,75){$\bullet$}
\rput(85,75){$\bullet$}

\rput(35,65){$\bullet$}
\rput(45,65){$\bullet$}
\rput(55,65){$\bullet$}
\rput(65,65){$\bullet$}
\rput(75,65){$\circ$}
\rput(85,65){$\circ$}

\rput(45,55){$\bullet$}
\rput(55,55){$\bullet$}
\rput(65,55){$\bullet$}
\rput(75,55){$\circ$}
\rput(85,55){$\circ$}

\rput(55,45){$\bullet$}
\rput(65,45){$\bullet$}
\rput(75,45){$\circ$}
\rput(85,45){$\circ$}

\rput(65,35){$\bullet$}
\rput(75,35){$\circ$}
\rput(85,35){$\circ$}

\rput(75,25){$\bullet$}
\rput(85,25){$\bullet$}

\rput(85,15){$\bullet$}

\endpspicture
\end{center}

\caption{Weights ($\bullet$)  on $U^+\times U^+$ and ($\circ$) on $\mathds{U}^+/(U^+\times U^+)$  \label{f.borel} }
\end{figure}

\begin{remark}
The proof given in \cite{LLS} relates the coefficient $\hat{c}_{\mu,v}^w(y)$ to one appearing in the equivariant homology of the affine Grassmannian, and then invokes the quantum-affine (Peterson) isomorphism and positivity in equivariant quantum cohomology.  Until the final sentence, our argument is independent of \cite{LLS}.  A completely independent proof, based on a direct transversality argument, appears in \cite{strong-pos}.
\end{remark}

In fact, the direct sum morphism is equivariant for a larger torus.  Let $\mathds{T}=T\times T'$ act on $\bV = V\oplus V$ by characters $y$ on the first factor and $y'$ on the second factor.  Then $\boxplus \colon\sGr(V) \times \sFl(V) \to \sFl(\bV)$ is equivariant for the induced $\mathds{T}$-action.  One can define coefficients $\hat{c}_{\mu,v}^w(y,y') \in \Z[y;y']$ by
\[
 \boxplus^*[\bbOmega_w] = \sum_{\mu,v} \hat{c}_{\mu,v}^w(y,y')\, [\Omega_\mu] \times [\Omega_v],
\]
or equivalently,
\[
 \boxplus_*[X_\mu \times X_v] = \sum_{\mu,v} \hat{c}_{\mu,v}^w(y,y')\, [\mathds{X}_w].
\]
The argument for Theorem~\ref{t.positive} also proves that these coefficients are also Graham-positive:

\begin{theorem}\label{t.positive2}
The coefficient $\hat{c}^w_{\mu,v}(y,y')$ is a nonnegative sum of squarefree monomials in linear forms
\[ 
y_- -y'_-,\; y_- - y'_+,\; y'_- - y_+,\; \text{ and }y'_+-y_+,
\]
where $y_+$ stands for any $y_i$ with $i>0$, $y_-$ for $y_i$ with $i\leq 0$, etc.
\end{theorem}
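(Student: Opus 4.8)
The plan is to reuse, essentially word for word, the Graham-positivity argument that proves Theorem~\ref{t.positive}, with the torus $T$ replaced throughout by $\mathds{T}=T\times T'$; the only thing that changes is the list of torus weights one feeds into Graham's theorem, and that change is precisely what sharpens the conclusion. First I would truncate: fix $m\gg0$, set $V=V_{(-m,m]}$ and $\bV=V\oplus V$, and work with $\boxplus\colon\Gr(m,V)\times\Fl(V)\to\Fl(\bV)$, which (by the paragraph preceding the theorem) is equivariant for the $\mathds{T}$-action scaling the first copy of $V$ by $T$ and the second by $T'$. Using the same ordered basis of $\bV$ as in the proof of Theorem~\ref{t.positive}, $\boxplus$ is equivariant for $B^+\times B^+\hookrightarrow\mathds{B}^+$, the image of $U^+\times U^+$ lies in $\mathds{U}^+$, and the $\mathds{B}^+$-invariant cycles in $\Fl(\bV)$ are exactly the Schubert varieties $\mathds{X}_w$. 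Poincar\'e duality in $H^*_{\mathds{T}}\Fl(\bV)$ turns the defining relation for $\hat{c}^w_{\mu,v}(y,y')$ into $\boxplus_*\bigl([X_\mu\times X_v]\bigr)=\sum_w\hat{c}^w_{\mu,v}(y,y')\,[\mathds{X}_w]$. I would then apply Graham's theorem \cite{graham} to the $(B^+\times B^+)$-invariant subvariety $\boxplus(X_\mu\times X_v)\subseteq\Fl(\bV)$, exactly as in Theorem~\ref{t.positive} (with $B_N=\mathds{B}^+$, $B_0=B^+\times B^+$, $U_0=U^+\times U^+$): this expresses its class as a nonnegative combination of the $[\mathds{X}_w]$ whose coefficients are products of the $\mathds{T}$-weights occurring on $\mathds{U}^+/(U^+\times U^+)$, and reading off the coefficient of $[\mathds{X}_w]$ then gives the required expansion of $\hat{c}^w_{\mu,v}(y,y')$.

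The computational core is the weight list. The quotient $\mathds{U}^+/(U^+\times U^+)$ is spanned by the matrix entries joining the two copies of $V$ (the $\circ$-positions of Figure~\ref{f.borel}), and because $T$ acts on the first copy while $T'$ acts on the second, these weights are $y_i-y'_j$ for $i,j\le0$; $y_i-y'_j$ for $i\le0<j$; $y'_i-y_j$ for $i\le0<j$; and $y'_i-y_j$ for $i,j>0$ --- that is, the four families $y_--y'_-$, $y_--y'_+$, $y'_--y_+$, $y_+-y'_+$ of the statement. The crucial new point --- the whole content of the refinement over Theorem~\ref{t.positive} --- is that with the bigger torus these weights are pairwise distinct, hence each occurs with multiplicity one: specializing $y'=y$ collapses the two forms $y_i-y'_j$ and $y'_i-y_j$ with $i\le0<j$ onto the single weight $y_i-y_j$ of multiplicity two, and that doubling is exactly what allowed squares of mixed linear forms in Theorem~\ref{t.positive}. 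Multiplicity one forces every product $\prod_{i\in I}\chi_i$ in Graham's formula, and therefore every term of $\hat{c}^w_{\mu,v}(y,y')$, to be a squarefree monomial in the four families. I would also point out that, unlike the proof of Theorem~\ref{t.positive}, this argument needs nothing from \cite{LLS}: the quotient $\mathds{U}^+/(U^+\times U^+)$ contains no weight lying within a single copy of $V$, so there is no analogue of the same-sign forms $y_i-y_j$ to exclude. Finally, the coefficient is independent of $m$ once $m\gg0$, which yields the statement for $\sFl(\bV)$.

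I do not expect a genuine obstacle here, since the entire scaffolding --- the direct sum morphism, the ordered basis on $\bV$, the identification of $\mathds{B}^+$-invariant cycles with Schubert varieties, and the applicability of Graham's theorem --- is inherited unchanged from the proof of Theorem~\ref{t.positive}. The one place that demands attention is the weight bookkeeping: correctly matching $\mathds{U}^+/(U^+\times U^+)$ with the cross-copy matrix entries and checking that enlarging $T$ to $T\times T'$ really does separate every previously doubled weight into two distinct ones. That separation is the heart of the matter; everything else is transcription.
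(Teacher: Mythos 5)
Your proposal is correct and is exactly the argument the paper intends: Theorem~\ref{t.positive2} is proved there by the single sentence ``the above argument proves that these coefficients are also Graham-positive,'' i.e.\ by rerunning the proof of Theorem~\ref{t.positive} with the torus enlarged to $\mathds{T}=T\times T'$, and your write-up supplies precisely that, including the key point that the enlarged torus separates the previously doubled mixed weights so that every character of $\mathds{T}$ on $\mathds{U}^+/(U^+\times U^+)$ occurs with multiplicity one, whence squarefreeness. Your further observation that no appeal to \cite{LLS} is needed to discard same-sign forms also matches the situation here, since once the primes are introduced the cross-copy weights no longer come in $\pm$ pairs (the only detail worth double-checking is the orientation of the positive-index family, which depends on the exact ordering of the last two blocks of the basis of $\bV$, an ambiguity already latent in Figure~\ref{f.borel}).
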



In other words, the forms appearing are $d-c$ with $c\prec d$, where $c$ and $d$ are among the $y$ and $y'$ variables, ordered so that
\[
  \{y_+\} \prec \{ y'_+\}  \prec \{ y'_- \} \prec \{ y_-\} ,
\]
and exactly one of $c$ or $d$ is a primed variable.  (To compare with the illustration in Figure~\ref{f.borel}, label the rows and columns by $-1,0,-1',0',1',2',1,2$, so that they are scaled by the corresponding characters $y_i$ and $y'_i$.)

The coefficients are equal to the {\em triple Edelman-Greene coefficients} $j^w_\mu(a,b)$ of \cite[\S10]{LLS}, after setting $y_i=-b_i$ and $y'_i=-a_i$; that is, $j^{w}_\mu(a,b) = \hat{c}_{\mu,e}^w(-b,-a)$.  Indeed, the definition shows that  $\hat{c}^w_{\mu,v}(y,y')$ are the coefficients appearing in the expansion
\[
 \enS_w(\bbc;x;y') = \sum_{\mu,v} \hat{c}^w_{\mu,v}(y,y')\, s_\mu(c|y)\,\enS_v(c';x;y'),
\]
which, noting our sign conventions, agrees with the characterization of $j^{wv^{-1}}_\mu(a,b)$ from \cite[\S10]{LLS}.  
So the theorem expresses positivity in the $a$ and $b$ variables, answering a question raised in \cite[Remark~10.13]{LLS}.

One recovers the coefficients $\hat{c}^w_{\mu,v}(y)$ by setting $y'=y$.  However, Theorem~\ref{t.positive} does not follow from Theorem~\ref{t.positive2}, since one can see factors of $y'_i-y_j$ with $i\prec j$.

\begin{example}\label{ex.no-specialize}
We have
\begin{align*}
  \hat{c}^{[2,3,-1,0,1]}_{(2,2),e}(y,y') &= (y'_1-y_2)(y'_1-y_1) 
  \intertext{and}
  \hat{c}^{[2,3,-1,0,1]}_{(1,1),\, [0,2,-1,1]}(y,y') &= y'_1-y_1.
\end{align*}
This shows there is no total order $\prec$ on the variables $(y,y')$ such that both (1) the coefficients $\hat{c}^w_{\mu,v}(y,y')$ are nonnegative sums of monomials in $d-c$, with $c\prec d$, and (2) the specialization $y'=y$ respects the order, i.e., $y_i \prec y'_j$ implies $y_i\prec y_j$.  (Of course, any coefficient $\hat{c}^w_{\mu,v}(y,y')$ violating (2) must map to $0$ under the specialization $y'=y$, as the two shown above do.) 
\end{example}

\begin{remark}
Specializing to the case where $v=w_\nu$ and $w=w_\lambda$, one has coefficients $\hat{c}_{\mu,\nu}^\lambda(y,y')$ for the direct sum morphism of Grassmannians; in particular, they are also positive.  On the other hand, these coefficients do not define a co-commutative coproduct, for the reasons noted in \cite{knutson-lederer}.  
The coefficients displayed in Example~\ref{ex.no-specialize} are $\hat{c}^{(3,3)}_{(2,2),\emptyset}(y,y')$ and $\hat{c}^{(3,3)}_{(1,1),(2,1)}(y,y')$, respectively.  But one computes $\hat{c}^{(3,3)}_{\emptyset,(2,2)}(y,y') = \hat{c}^{(3,3)}_{(2,1),(1,1)}(y,y')=0$.
\end{remark}

Consider the corresponding direct sum morphism $\boxplus\colon \Gr(m,V_{(-m,m]}) \times \Fl(V_{(-m,m]}) \to \Fl(m,m+1,\ldots,3m;\bV_{(-m,m]})$ of finite-dimensional varieties, and identify $\bV_{(-m,m]} = V_{(-2m,2m]}$ using the ordered basis which lists $(e_{i},0)$, and then $(0,e_{i})$.  As before, the image of $X_\mu \times X_v$ under direct sum is a Richardson variety.  Specifically, define a permutation of $\{-2m+1,\ldots,2m\}$ by
\begin{align*}
 \mu \oslash_m v &= [w_\mu(-m+1)-m,\ldots,\,w_\mu(0)-m,\,v(-m+1)+m,\ldots \\ 
 & \qquad \ldots,\, v(m)+m,\,w_\mu(1)-m,\ldots,\,w_\mu(m)-m ].
\end{align*}
For example, for $\mu=(3,1,1)$, $v=[0,-1,2,-2,3,1]$, and $m=3$, we have $\mu \oslash_m v = [-4,-3,0,3,2,5,1,6,4,-5,-2,-1]$.

\begin{proposition}
Assume $m$ is large enough so that $w_\mu$ and $v$ lie in $\Sgp_{(-m,m]}$.  Let $x^{(m)} = [-2m+1,\ldots,\,-m,\,1,\ldots,\,2m,\,-m+1,\ldots,\,0]$.  Then
\[
  \boxplus( X_\mu \times X_v ) = X_{\mu \oslash_m v} \cap \Omega_{x^{(m)}},
\]
a Richardson variety in $\Fl(m,m+1,\ldots,3m;\bV_{(-m,m]})$.
\end{proposition}

The proof is the same as that of \cite[Proposition~2.1]{thomas-yong}.  This leads to another way of computing the Edelman-Greene coefficients.

\begin{corollary}\label{c.coprod-schur-schub}
The polynomial $\hat{c}_{\mu,v}^{w}(y,y')$ is equal to the coefficient of $\enS_{\mu\oslash_m v}(\mathbb{c};x;\mathbb{y})$ in the expansion of $\enS_{w}(\tilde{\mathbb{c}};x;\tilde{\mathbb{y}})\cdot \enS_{x^{(m)}}(\mathbb{c};x;\mathbb{y})$, where
\begin{align*}
\mathbb{y} &= (y_{-m+1},\ldots,y_m,y'_{-m+1},\ldots,y'_m) \intertext{and}
\tilde{\mathbb{y}} &= (y_{-m+1},\ldots,y_0,y'_{-m+1},\ldots,y'_m,y_1,\ldots,y_m),
\end{align*}
and $\mathbb{c}$ and $\tilde{\mathbb{c}}$ are determined by specializing $\prod_{i=-2m+1}^0\frac{1+t_i}{1-x_i}$ to $t=\mathbb{y}$ and $t=\tilde{\mathbb{y}}$, respectively.
\end{corollary}

\begin{proof}
The specializations of the $y$ variables ensure that $\enS_w(\tilde{\mathbb{c}};x;\tilde{\mathbb{y}}) = [\bbOmega_w]$, $\enS_{\mu\oslash_m v}(\mathbb{c};x;\mathbb{y}) = [\Omega_{\mu\oslash_m v}]$, and $\enS_{x^{(m)}}(\mathbb{c};x;\mathbb{y}) = [\Omega_{x^{(m)}}]$ in $H_T^*\Fl(m,m+1,\ldots,3m;\bV_{(-m,m]})$.  And by Poincar\'e duality, the coefficient of $[\Omega_{\mu\oslash_m v}]$ in the expansion of $[\bbOmega_w]\cdot [\Omega_{x^{(m)}}]$ is equal to the (equivariant) integral
\[
 \int_{\Fl(\bV)} [\bbOmega_w]\cdot[\Omega_{x^{(m)}}]\cdot [X_{\mu\oslash_m v}].
\]
We have $[\Omega_{x^{(m)}}]\cdot [X_{\mu\oslash_m v}] = [ \Omega_{x^{(m)}} \cap X_{w_\mu \oslash_m v} ] = [ \boxplus( X_\mu \times X_v )]$, so this integral becomes
\[
 \int_{\Fl(\bV)} [\bbOmega_w]\cdot \boxplus_*[X_\mu \times X_v ] = \int_{\Gr(V)\times\Fl(V)} \boxplus^*[\bbOmega_w] \cdot [X_\mu \times X_v ],
\]
which is the coefficient of $[\Omega_\mu]\times [\Omega_v]$ in $\boxplus^*[\bbOmega_w]$, as claimed.
\end{proof}

\section{Type C}\label{s.typeC}

Most of the foregoing discussion has analogues in other types---in fact, one motivation was to develop a type A analogue of constructions from other classical types.  Here we will discuss some aspects of type C, focusing on the relationship with type A.

Changing notation, we write $T$ for the ``positive'' torus $\prod_{i>0}\C^*$, with standard characters $y_i$ for $i>0$, and $\Tz = T\times \C^*$, where the extra $\C^*$ has character $z$.  This acts on $V$ so that, for $i>0$, $e_i$ has weight $y_i$, and $e_{1-i}$ has weight $z-y_i$.  If we let the larger torus $\left(\prod_{i\in\Z}\C^*\right) \times \C^*$ act on $V$ in the standard way, so that $e_i$ is scaled by $y_i$ for all $i$, then $\Tz$ embeds so that the restriction of characters is $y_i \mapsto y_i$ for $i>0$ and $y_i\mapsto z-y_{1-i}$ for $i\leq 0$.  The corresponding homomorphism of equivariant cohomology rings, $\Z[y][z] \to \Z[y_+][z]$, is defined the same way.

\subsection{Lagrangian Grassmannians and isotropic flag varieties}

We fix a standard symplectic form on $V$, defined by setting
\[
  \langle e_{1-i}, e_i \rangle = -\langle e_i, e_{1-i} \rangle = 1
\]
for $i>0$, and setting all other pairings to $0$.  The form
\[
  \langle \;,\; \rangle \colon V \otimes V \to \C_z
\]
is preserved by $\Tz$, where the target $\C_z$ is scaled by character $z$.  When restricted to each $2m$-dimensional subspace $V_{(-m,m]}$, this defines a symplectic form and an isomorphism
\[
  V_{(-m,m]} \xrightarrow{\sim} V_{(-m,m]}^*\otimes \C_z.
\]
Using these subspaces to define the restricted dual of $V$, this also gives an isomorphism $V \xrightarrow{\sim} V^{*'}\otimes \C_z$.

We fix the flag $V_{\leq\bullet}$ as before.  The \define{infinite Lagrangian Grassmannian} is the subvariety
\[
  \sLG \subseteq \sGr
\]
parametrizing subspaces $E\subseteq V$ which belong to $\sGr$ and are isotropic with respect to the symplectic form, i.e., those $E$ for which $\langle\;,\;\rangle$ becomes identically zero when restricted to $E$.  As for $\sGr$, we use the notation $\sLG(V;V_{\leq\bullet})$ when there is ambiguity in the flag.

The subspace $V_{\leq0}$ is isotropic, so it lies in $\sLG$.  The subspace $V_{>0}$ is also isotropic, but it does not lie in $\sGr$ so does not define a point of $\sLG$.  (Note, however, that the symplectic form defines isomorphisms $V_{\leq0} \isom V_{>0}^{*'} \otimes \C_z$.)

As noted in the introduction, one has compatible embeddings
\[
\begin{tikzcd}
 \LG(m,V_{(-m,m]}) \ar[r,hook] \ar[d,hook] & \LG(m+1,V_{(-m-1,m+1]}) \ar[d,hook] \\
 \Gr(m,V_{(-m,m]}) \ar[r,hook] & \Gr(n+1,V_{(-m-1,m+1]}),
\end{tikzcd}
\]
making $\sLG = \bigcup_{m>0}\LG(m,V_{(-m,m]})$.

The cohomology ring of each finite-dimensional Lagrangian is generated by Chern classes of the tautological bundle $\tS \subseteq V_{(-m,m]}$, with relations coming from the Whitney sum formula.  Using $c=c^T(V_{\leq0}-\tS)$, these relations are determined by $c\cdot \bar{c} = 1$, where
\[
 \bar{c} = c^T(V_{\leq0}^*\otimes \C_z - \tS^*\otimes \C_z).
\]
(Using the symplectic form, one has $V_{(-m,m]}/\tS \isom \tS^*\otimes\C_z$ and $V_{\leq0}^*\otimes \C_z = V_{>0}$, so the relations follow.)  By standard Chern class identities, one writes
\[
 \bar{c}_p = \sum_{i=1}^p \binom{p-1}{i-1}\, (-z)^{p-i} (-1)^i\, c_i,
\]
Extracting the degree $2p$ part of $c\cdot \bar{c}$, one finds relations
\[
 C_{pp} := \sum_{0\leq i\leq j \leq p} (-1)^j \left( \binom{j}{i}+\binom{j-1}{i} \right) z^i\, c_{p-i+j}\, c_{p-j} = 0, 
\]
for $p>0$.  Taking the limit, we have
\[
  H_{\Tz}^*\sLG = \bGamma[y_+],
\]
where
\[
  \bGamma = \Lambda[z]/( C_{pp} )_{p>0}.
\]
Pullback by the inclusion $\sLG \hookrightarrow \sGr$ induces the canonical surjection $\Lambda[z][y] \tto \bGamma[y_+]$.

For $k\leq 0$, one defines $\sIG^k \subseteq \sGr^k$ in the same way.  It is the union
\[
  \sIG^k = \bigcup_{m>|k|} \IG(m+k,V_{(-m,m]})
\]
of (possibly non-maximal) isotropic Grassmannians.  The (type C) \define{infinite isotropic flag variety} is the variety
\[
  \sIFl = \{ E_\bullet : (\cdots \subset E_{-1} \subset E_{0} =E \subset V) \,|\, E_i \in \sIG^{i} \},
\]
a subvariety of $\prod_{k\leq 0} \sIG^k$.  Its cohomology ring is
\[
  H_{\Tz}^*\sIFl = \bGamma[x_+,y_+],
\]
using $x_i = c_1^{\Tz}(\tS_{-i+1}/\tS_{-i})$ for $i>0$, where $(\cdots \subset S_{-1} \subset S_{0} =S \subset V)$ is the tautological flag.  (As usual, these should be regarded as the stable limits of vector bundles on the finite-dimensional type C flag varieties.)

Just as for finite-dimensional varieties, an isotropic flag extends canonically to a complete flag, by $E_{i} = E_{-i}^\perp$ for $i>0$, and one obtains an embedding $\sIFl \hookrightarrow \sFl$.  Using the symplectic form to identify $V\isom V^{*'}\otimes\C_z$, this realizes $\sIFl$ as the fixed locus for the duality involution described in  \S\ref{ss.duality} (or rather, a variation of that involution which twists by $\C_z$, see \cite{part1}).  
In particular, we have $E_{i}/E_{i-1} \isom (E_{1-i}/E_{-i})^*\otimes \C_z$ for $i\geq1$.

The pullback on cohomology is the surjection $\Lambda[z][x,y] \tto \bGamma[x_+,y_+]$, where  $x_i \mapsto x_i$ for  $i>0$, and $x_i\mapsto z-x_{1-i}$ for $i\leq0$.  Realizing $\sIFl\subset \sFl$ is the fixed locus of a (twisted) duality involution gives another way of viewing the relations defining this quotient of $\Lambda[z][x,y]$.  The corresponding homomorphism
\[
  \bm\omega(c_k) = \sum_{i=1}^k \binom{k-1}{i-1} (-z)^{k-i} S_{1^i}(c), \quad \bm\omega(x_i)=z-x_{-i}, \quad \bm\omega(y_i) = z-y_{-i}
\]
must be the identity on $H_\Tz^*\sIFl$, and the relations express this.

\begin{remark}\label{r.c-untwisted}
The ring $\Gamma = \bGamma/(z)$ is the classical ring of Schur $Q$-polynomials.  This can be written as $\Gamma=\Lambda/(C_{pp})_{p>0}$, where now $C_{pp} = \sum_{j=0}^p (-1)^j c_{p+j}\,c_{p-j}$.  Many statements and formulas become much simpler in the ``untwisted'' case where $z=0$.
\end{remark}

\begin{remark}\label{r.gamma-lambda}
In symmetric function theory, one often embeds $\Gamma \hookrightarrow \Lambda$, considering both as rings of symmetric functions in auxiliary variables.  The ring $\bGamma$ also embeds in $\Lambda[z]$.  This requires more care, but it also points the way to a geometric interpretation.  It is helpful to realize these inclusions of rings as pullbacks via a different map between infinite Grassmannians.  We will describe it in terms of compatible maps of finite-dimensional varieties.

To lighten the notation, let $V_m = V_{(-m,m]}$ and $L=\C_z$, and let $\bV_m = V_m \oplus V^{*}_m \otimes L$, with its canonical $L$-valued symplectic form.  For any fixed $k$, there is a map
\[
   \Gr(m+k,V_m) \hookrightarrow \LG(\bV_m),
\]
sending a point $A\subset V_m \twoheadrightarrow B$ to $A\oplus B^*\otimes L \subset \bV$.  One checks that this is an isotropic subspace.  
The space $\mathds{E}_m = V_{\leq 0} \oplus V_{>0}^* \otimes L \subset \bV_m$ is also isotropic subspace.  Let $\tbS\subset\bV_m$ be the tautological bundle.  Pullback sends $c^{\Tz}(\bV_m - \tbS - \mathds{E}_m)$ to
\[
 c^\Tz(\bV_m - S - Q^*\otimes L - \mathds{E}_m ) = c^\Tz( V_{>0}-V_{>0}^*\otimes L + S^*\otimes L - S),
\]
where $S \subset V_m \twoheadrightarrow Q$ are tautological bundles on $\Gr(m+k,V_m)$.

These maps are all compatible with the natural inclusions $V_m\subset V_{m+1}$.  So there is a corresponding morphism $\sGr^{(k)}(V) \to \sLG(\bV)$.  The corresponding pullback map on cohomology, $\bGamma \to \Lambda[y_+][z]$ is given by
\begin{equation}\label{e.gamma-embed}
  c \mapsto \prod_{i> 0} \frac{1+y_i}{1-y_i+z} \prod_{i\leq k} \frac{1+x_i+z}{1-x_i},
\end{equation}
where $x_{-m+1},\ldots,x_k$ are Chern roots of $S^*$ on each finite-dimensional $\Gr(m+k,V_m)$, and $\Lambda$ is regarded as the ring of supersymmetric functions in the variables $x_i$ for $i\leq k$ and $y_i$ for $i>0$.  The series on the right-hand side of \eqref{e.gamma-embed} is stable with respect to setting $x_i=y_i=0$ for $|i|>m$, so its homogeneous pieces are well-defined elements of $\Lambda[y_+][z]$, as they must be.  (They are deformations of the classical polynomials $Q_p(x)$.)
\end{remark}

\subsection{Schubert varieties and Schubert polynomials}

The group of \define{signed permutations} is the subgroup $W_\infty \subset \Sgp_{\Z}$ of permutations $w$ such that $w(1-i)=1-w(i)$ for all $i$.  These are the elements of $\Sgp_\Z$ which are fixed by the involution $\omega$.  The submonoid $\mathrm{SgnInj}(\Z) \subset \Inj(\Z)$ is defined similarly, and one also has the submonoid $\mathrm{SgnInj}^0(\Z) \subset \mathrm{SgnInj}(\Z)$ of signed injections with finitely many sign changes.  (The balancing condition is automatic here.)  Choosing a large enough $m$ so that $w(i)=i$ for $|i|>m$, we often write $w\in W_\infty$ in \define{one-line notation} as $w=[w(1),\ldots,w(m)]$.

Just as $\Inj^0(\Z)$ indexes fixed points of $\sFl$, the subset $\mathrm{SgnInj}^0(\Z)$ indexes fixed points of $\sIFl$: the point $p_w$ corresponds to the flag $E_\bullet$ with $E_k$ spanned by $e_{w(i)}$ for $i\leq 0$.  (With conventions as in \S\ref{s.fp} for integers not in the image of $w$.)

Schubert varieties are indexed by signed permutations.  For each $w \in W_\infty$, there is a Schubert variety in $\sIFl$, defined by
\[
  \Omega_w = \{ E_\bullet \,|\, \dim(E_p \cap V_{>q}) \geq k_w(p,q) \text{ for } p\leq 0 \text{ and all }q \},
\]
where $k_w(p,q) = \#\{ a\leq p \, |\, w(a) >q \}$, as before.

A \define{strict partition} $\lambda = (\lambda_1>\cdots>\lambda_s>0)$ determines a \define{Grassmannian signed permutation} $w=w_\lambda$ by setting $w(i)=1-\lambda_i$ for $1\leq i\leq s$, and filling in the remaining unused values in increasing order.  For example, $\lambda = (4,2,1)$ has Grassmannian signed permutation $w_\lambda=[-3,-1,0,3]$.  
Schubert varieties $\Omega_\lambda \subseteq \sLG$ are defined by conditions $\dim( E \cap V_{>\lambda_k} )\geq k$.

As before, Schubert varieties in $\sIFl$ determine unique Schubert classes.  The \define{(twisted) double Schubert polynomial} of type C is the polynomial such that
\[
  \tenS_w^C(c;x;y) = [\Omega_w]
\]
under $\bGamma[x_+,y_+] = H_{\Tz}^*\sIFl$.  For $z=y=0$, this is precisely the definition in \cite{BH}; for $z=0$, these are the double Schubert polynomials of \cite{IMN}.  Among the many wonderful properties of these polynomials, we mention the Cauchy formula:
\begin{align}\label{e.cauchyC}
  \tenS_w^C(\bbc;x;y) &= \sum_{uv \dot{=} w} \tenS_v^C(c;x;t) \, \tenS_u^C(c';z-t;y),
\end{align}
where $\bbc=c\cdot c'$.  

One can compare Schubert polynomials in types A and C via the canonical surjection $\Lambda[z][x,y] \to \bGamma[x_+,y_+]$: for $w\in \Sgp_+\subset W_\infty$, this map sends $\tenS_w^A(c;x;y)$ to $\tenS_w^C(c;x;y)$.  A geometric proof is in \cite{part1}.

The \define{twisted double $Q$-polynomials} $\bm{Q}_\lambda(c|y) = \tenS_{w_\lambda}^C(c;x;y)$ correspond to Schubert classes in $\sLG$, so they form a basis for $\bGamma[y_+]$ over $\Z[z][y_+]$.  At $z=0$ (and an appropriate evaluation of $c$), these specialize to Ivanov's double $Q$-functions; at $z=y=0$, they specialize to Schur's $Q$-polynomials $Q_\lambda(c)$, which form a basis for $\Gamma$.

\subsection{Direct sum and coproduct}

The embedding $\sLG \subset \sGr$ is compatible with the direct sum map, where one takes the symplectic form on $\bV=V\oplus V$ to be the difference of symplectic forms on each summand.  So one obtains a coproduct $\Delta\colon \bGamma[y_+] \to \bGamma[y_+]\otimes_{\Z[y]} \bGamma[y_+]$.  Similarly, the direct sum morphism $\sLG(V) \times \sIFl(V) \to \sIFl(\bV)$ determines a co-module homomorphism $\bGamma[x_+;y_+] \to \bGamma[y_+]\otimes_{\Z[y]} \bGamma[x_+,y_+]$.

In Schubert classes, we can again write
\[
  \boxplus^*[\Omega_w] = \sum_{\mu,v} \hat{f}_{\mu,v}^w(y;z) [\Omega_u] \times [\Omega_v],
\]
for strict partitions $\mu$ and signed permutations $v,w$, where the polynomials $\hat{f}_{\mu,v}^w(y;z)$ are \define{type C double Edelman-Greene coefficients}. 

Using Cauchy formulas, this co-module operation on Schubert polynomials can be written as
\begin{align*}
 \tenS_w^C(\bbc;x;y)  &= \sum_{uv\dot{=}w}  \bm{F}^C_u(c;y) \cdot \tenS_v^C(c';x;y) \\
 &= \sum_{\mu,v} \hat{f}_{\mu,v}^w(y;z) \, \bm{Q}_\mu(c|{y}) \, \tenS_v^C(c';x;y) ,
\end{align*}
where the \define{(twisted) double type C Stanley polynomial} is defined as
\[
  \bm{F}^C_w(c;y) = \tenS^C_w(c;z-y;y).
\]
As before, the coefficients $\hat{f}_{\mu,v}^w(y;z)$ arise in the expansion of $\bm{F}^C_{wv^{-1}}$ in the $\bm{Q}_\mu$ basis.

Also as before, the direct sum morphism is actually equivariant with respect to the larger $T \times T\times (\C^*)$ action on $\bV=V\oplus V$, where the $\C^*$ factor still acts diagonally (though once again, the extended equivariant structure does not define a commutative coproduct).  Writing $y_i$ for the characters on the first factor and $y'_i$ for those on the second factor, we can expand
\[
  \boxplus_*[X_\mu  \times  X_v] = \sum_{\mu,v}\hat{f}_{\mu,v}^w(y,y';z) \, [\mathds{X}_w]
\]
in $H_{T \times T\times (\C^*)}^*\sIFl(\bV)$.

\begin{theorem}\label{t.positiveC}
The coefficient $\hat{f}^w_{\mu,v}(y,y';z)$ is a nonnegative sum of squarefree monomials in linear forms $-y'_i-y_j+z$ and $y'_i-y_j$.
\end{theorem}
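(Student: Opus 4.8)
The plan is to adapt the proof of Theorem~\ref{t.positive} to type C, applying Graham's positivity theorem to the direct sum morphism of isotropic flag varieties.

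First I would pass to a finite-dimensional model: fix $m\gg0$, let $V=V_{[-m,m]}$ with its standard ($\C_z$-valued) symplectic form, and let $\bV=V\oplus V$ equipped with the \emph{difference} of the two forms, so that subspaces $E\oplus F$ with $E,F$ isotropic in $V$ are isotropic in $\bV$. A given coefficient $\hat{f}^w_{\mu,v}(y,y';z)$ is computed on a finite-dimensional (partial) isotropic flag variety of $\bV$ by Poincar\'e duality: one has $\boxplus_*[X_\mu\times X_v]=\sum_w\hat{f}^w_{\mu,v}(y,y';z)\,[\mathds{X}_w]$, where $X_\mu\subseteq\LG(V)$ and $X_v\subseteq\IFl(V)$ are the Schubert varieties invariant under the Borel $B^+\subseteq Sp(V)$ stabilizing the standard isotropic flag $V_{\leq\bullet}$, and the $\mathds{X}_w\subseteq\IFl(\bV)$ are the Schubert varieties of the target. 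The subvariety $\boxplus(X_\mu\times X_v)$ is then $(B^+\times B^+)$-invariant.

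The crux of the setup is to order a symplectic basis of $\bV$ by interleaving the two copies of $V$ (negative-index vectors of both copies first, then positive-index vectors, the blocks arranged exactly as in the ordering used for Theorem~\ref{t.positive}) in a way adapted to the symplectic form on $\bV$; reading this basis forward and backward gives isotropic flags $\bV^+_\bullet$ and $\bV^-_\bullet$ with Borel subgroups $\mathds{B}^\pm\subseteq Sp(\bV)$, chosen so that $\mathds{B}^+\supseteq B^+\times B^+$ and so that $\mathds{B}^-$-invariant Schubert varieties carry the classes $\tenS_w^C(\bbc;x;y)$. Graham's theorem then writes the $(B^+\times B^+)$-invariant class $[\boxplus(X_\mu\times X_v)]$ as a sum of $\mathds{B}^+$-invariant cycles---necessarily the type C Schubert varieties $\mathds{X}_w$---with each coefficient a nonnegative-integer combination of monomials in the $T\times T\times\C^*$-characters occurring on the quotient of unipotent radicals $\mathds{U}^+/(U^+\times U^+)$. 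This is the same mechanism as in the proof of Theorem~\ref{t.positive}, now in type C.

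The remaining step, and the main obstacle, is to identify these characters. A dimension count gives $2m^2$ of them, one for each positive root of $\mathfrak{sp}(\bV)$ that genuinely mixes the two summands $V$; since a long root $2\tilde\epsilon_k$ of $\mathfrak{sp}(\bV)$ acts within a single summand, no long root survives in the quotient, and the mixing roots are of the two short types: ``$\tilde\epsilon_a-\tilde\epsilon_b$'', contributing weights of the form $y'_i-y_j$, and ``$\tilde\epsilon_a+\tilde\epsilon_b$'', contributing weights of the form $-y'_i-y_j+z$ (the shift by $z$ because the form is $\C_z$-valued, so the weights of a dual pair of basis vectors sum to $z$). These $2m^2$ forms are pairwise distinct as elements of $\Z[y,y';z]$, so $\Lie(\mathds{U}^+)/\Lie(U^+\times U^+)$ is multiplicity-free and Graham's theorem yields \emph{squarefree} monomials, giving the theorem. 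The delicate point is to fix the interleaved ordering so that it is simultaneously adapted to the symplectic form on $\bV$, makes the Graham argument go through as in type A, and produces exactly this weight list; this is the symplectic analogue of the weight diagram of Figure~\ref{f.borel}, and once it is verified the conclusion is formal. (Specializing $v=w_\nu$, $w=w_\lambda$ gives positivity of the type C Grassmannian coproduct coefficients $\hat{f}^\lambda_{\mu,\nu}(y,y';z)$, and setting $y'=y$ recovers positivity of $\hat{f}^w_{\mu,v}(y;z)$.)
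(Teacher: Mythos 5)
Your proposal follows exactly the paper's route: the paper's proof of Theorem~\ref{t.positiveC} consists of the single remark that the argument is the same as for Theorems~\ref{t.positive} and \ref{t.positive2}, applying Graham's theorem and keeping track of the weights on the relevant unipotent groups inside the symplectic groups. Your weight bookkeeping (the $2m^2$ mixing short roots, the absence of long roots in the quotient $\mathds{U}^+/(U^+\times U^+)$, the $z$-shift from the $\C_z$-valued form, and multiplicity-freeness giving squarefreeness) is consistent with the stated forms $-y'_i-y_j+z$ and $y'_i-y_j$, and supplies more detail than the paper itself does.
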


The proof is the same as for Theorems~\ref{t.positive} and \ref{t.positive2}, applying Graham's theorem and keeping track of weights on the corresponding unipotent groups in symplectic groups.  Specializing $y=y'$, one obtains the type C analogue of a weak form of Theorem~\ref{t.positive}.

\begin{remark}
In the Lagrangian Grassmannian case where $w=w_\lambda$ and $v=w_\nu$ for strict partitions $\lambda$ and $\nu$, the polynomial $\hat{f}_{\mu,\nu}^\lambda(y)$ may be regarded as a \define{dual Hall-Littlewood coefficient}.  It expresses the coproduct
\[
  \bm{Q}_\lambda(\bbc|y) = \sum_{\mu,\nu} \hat{f}_{\mu,\nu}^\lambda(y;z)\, \bm{Q}_\mu(c|y) \cdot \bm{Q}_\nu(c'|y),
\]
where $\bbc=c\cdot c'$ as usual. 
Evaluating at $y=z=0$, this is the structure constant for multiplication in the basis of $P$-Schur functions; that is, $\hat{f}_{\mu,\nu}^\lambda(0) = f_{\mu,\nu}^\lambda$ in the notation of \cite[\S III.5]{Mac1}.  Combinatorial formulas for this case were given by Stembridge \cite{stembridge}.  
\end{remark}



\noindent
{\sc Department of Mathematics, The Ohio State University, Columbus, OH 43210}

\noindent
{\it Email address:} anderson.2804@math.osu.edu

\end{document}